\theoremstyle{plain}
\newtheorem{theorem}{Theorem}[section]
\newtheorem{coro}{Corollary}[section]
\newtheorem{lemma}{Lemma}[section]
\newtheorem{propo}{Proposition}[section]
\theoremstyle{definition}
\newtheorem{example}{Example}[section]
\newtheorem{remark}{Remark}[section]
\newcommand{\N}{\mathbb{N}}
\newcommand{\Z}{\mathbb{Z}}
\newcommand{\E}{\mathbb{E}}
\newcommand{\Var}{\operatorname{Var}}
\newcommand{\PP}{\mathbb{P}}
\newcommand{\C}{\mathbb{C}}
\newcommand{\id}{\operatorname{id}}
\newcommand{\K}{\mathbb{S}}
\title{Strictly stationary solutions of ARMA equations in Banach spaces}
\author{
Felix Spangenberg\thanks{Institut f\"ur Mathematische Stochastik, TU
Braunschweig,  Pockelsstra{\ss}e 14, D-38106 Braunschweig, Germany
\texttt{f.spangenberg@tu-bs.de}}}
\begin{document}
\maketitle

\begin{abstract}We obtain necessary and sufficient conditions for the existence of strictly stationary solutions of ARMA equations in Banach spaces with independent and identically distributed noise under certain assumptions. First, we obtain conditions for ARMA($1$,$q$) equations by excluding zero and the unit circle from the spectrum of the operator of the AR part. We then extend this to ARMA($p$,$q$) equations. Finally, we discuss various examples.
\end{abstract}

\section{Introduction}

An ARMA($p$,$q$) process is a stochastic process $(Y_t)_{t \in \Z}$ that fulfils the following recursion equation (ARMA, AutoRegressive Moving Average equation)
\[Y_t - a_1 Y_{t-1} - \ldots - a_p Y_{t -p}= b_0 Z_t + \ldots + b_q Z_{t-q}, \quad t \in \Z.\]
Here $a_1,\ldots,a_p$ and $b_0,\ldots, b_q$ are usually complex numbers and $(Z_t)_{t \in \Z}$ is a sequence of random variables, which are mostly either i.i.d. or uncorrelated. Such a sequence is called white noise. $Y$ and $Z$ are complex-valued stochastic processes as this facilitates many technical problems. An excellent introduction into time series in general and ARMA processes in particular is the monograph by Brockwell and Davis \cite{BrockwellDavis}.

A natural extension is to consider multivariate ARMA processes and one can take a further step by looking at ARMA processes in infinite dimensional vector spaces. For an introduction into time series in Banach spaces, see the monograph by Bosq \cite{Bosq} and the survey articles by Hörmann and Kokoszka \cite{Hoermann} and Mas and Pumo \cite{Mas}. ARMA processes in Banach spaces can be applied for example in climate prediction and financial modelling, see e.g. \cite{Besse} and \cite{Klueppelberg}. For work in functional data with heavy tails, see for instance the article by Meinguet and Segers \cite{Segers} which deals with extreme value theory of functional time series.

In this article we study conditions for the existence of solutions of ARMA equations in a separable complex Banach space $\mathcal{B}$ of the form
\begin{equation}Y_t - A_1 Y_{t-1} - \ldots - A_p Y_{t -p}= B_0 Z_t + \ldots + B_q Z_{t-q}, \quad t \in \Z, \label{ARMAEQ}\end{equation}
where $A_1,\ldots, A_p$ and $B_0,\ldots,B_q$ are linear continuous operators in $\mathcal{B}$. Any $\mathcal{B}$-valued stochastic process $(Y_t)_{t \in \Z}$ which satisfies this equation is called a solution of the ARMA($p$,$q$) equation or an ARMA($p$,$q$) process.
We investigate strictly stationary solutions of ARMA equations, where the white noise  $(Z_t)_{t \in \Z}$ is a series of Banach-space-valued i.i.d. random variables. This extends the work by Bosq \cite{Bosq} who deals with causal weakly and strictly stationary solutions of AR equations with finite second-moment white noise.

Firstly, we generalise results by Brockwell and Lindner \cite{BrockwellLindner} and Brockwell, Lindner and Vollenbr\"ocker \cite{BrockLindVoll}. They give necessary and sufficient conditions for the existence of strictly stationary solutions for multivariate ARMA($1$,$q$) equations in terms of the eigenvalues of the matrix of the AR part and $\log^+$-moment conditions on the white noise.  Our approach is to partially generalise this by investigating the spectrum of the operator of the AR($1$) part. Assuming that the spectrum of the autoregressive operator does not contain zero and has empty intersection with the unit circle, we derive necessary and sufficient conditions for the existence of a strictly stationary solution in terms of finiteness of $\log^+$-moments. Secondly, we extend our results to  ARMA($p$,$q$) processes by using their representations as  $\mathcal{B}^p$-valued ARMA($1$,$q$) processes. Thirdly, we give an additional representation of the solution as a moving average process of infinite order by employing Laurent series. 
Finally, we look at what can happen in the case when zero is in the spectrum or the intersection of the spectrum and the unit circle is nonempty by looking at various examples.

\section{Results}

We now give our main results. We first give necessary and sufficient conditions for the existence of strictly stationary ARMA($1$,$q$) processes when the spectrum does not contain zero and elements of the unit circle.  We extend these results to ARMA($p$,$q$) processes. Finally, we give a representation of the solution as a moving average process of infinite order using Laurent series.

The proof of the characterisation of the existence of strictly stationary  multivariate ARMA($1$,$q$) processes in \cite{BrockLindVoll} makes use of the Jordan canonical decomposition. A Jordan decomposition for operators in Banach spaces does not exist in full generality. Therefore, we restrict ourselves to operators in the AR part whose spectra, denoted by $\sigma(A_1)$, do not contain elements of the unit circle $\K$. Let $A_1$ be an operator with $\sigma(A_1)\cap \K = \emptyset$. We can diagonalise $A_1$ via an invertible linear continuous operator $S: \mathcal{B}_1 \oplus \mathcal{B}_2 \to \mathcal{B}$ such that
\[ S^{-1}A_1S= \begin{pmatrix} \Lambda_1 & 0 \\ 0 & \Lambda_2 \end{pmatrix},\]
where $\Lambda_1: \mathcal{B}_1 \to \mathcal{B}_1$ and $\Lambda_2: \mathcal{B}_2 \to \mathcal{B}_2$ are bounded linear operators with $\sigma(\Lambda_1) = \{z \in \C: |z|<1\} \cap \sigma(A_1)$ and $\sigma(\Lambda_2) = \{z \in \C: |z|>1\}\cap \sigma(A_1)$. This can be proven by using functional calculus for holomorphic functions. For a proof see Theorem 6.17 in Kato \cite{Kato} Chapter III.6.4: $S^{-1}$ is given by $S^{-1}=(P,\id-P)$, where $P = - \frac{1}{2 \pi i} \int_{\gamma} R(z,A_1) \, dz$, with $\gamma$ being a path following the unit circle and $R(z,A_1)$ being the resolvent operator of $A_1$. $\mathcal{B}_1$ and $\mathcal{B}_2$ are the images of $P$ and $\id -P$. The space $\mathcal{B}_1 \oplus \mathcal{B}_2:=\mathcal{B}_1 \times \mathcal{B}_2$ is endowed with a norm, e.g. the maximum norm, that renders it into a Banach space, and $S$ is an isomorphism between $\mathcal{B}_1 \oplus \mathcal{B}_2$ and $\mathcal{B}$. Hence $\mathcal{B}_1 \oplus \mathcal{B}_2$ and $\mathcal{B}$  can be identified.

An operator $A_1$ that fulfils $\sigma(A_1)\cap \K = \emptyset$ is called \textsl{hyperbolic}. The notion of hyperbolicity is used in the theory of operator semigroups in the context of stability, see for example \cite{Engel}.

We use the same diagonalisation of the ARMA($1$,$q$) equation as  in \cite{BrockLindVoll}:
the ARMA($1$,$q$) equation
\begin{equation}Y_t - A_1 Y_{t-1}= B_0 Z_t + \ldots + B_q Z_{t-q}, \quad t \in \Z \label{ARMA1q}\end{equation}
has a strictly stationary solution $(Y_t)_{t \in \Z}$ if and only if the corresponding equation for $X_t:=S^{-1} Y_t$
\[X_t - S^{-1} A_1 S X_{t-1}= S^{-1} B_0 Z_{t} + \cdots + S^{-1} B_q Z_{t-q}, \quad t \in \Z,\]
has a strictly stationary solution. We define $I_i$ as the projection on the $i$-th component of $\mathcal{B}_1 \oplus \mathcal{B}_2$ and $X_t^{(i)}=I_i X_t$ for $i=1,2$. Then there is a strictly stationary solution of the original equation if and only if there are strictly stationary solutions for
\begin{equation}X_t^{(i)} - \Lambda_i X_{t-1}^{(i)}= I_i S^{-1} B_0 Z_{t} + \cdots + I_i S^{-1} B_q Z_{t-q}, \quad t \in \Z, \quad i=1,2. \label{ARMAcomponentwise}\end{equation}

Recall that the spectral radius $r(A_1)=\lim_{n \to \infty} \sqrt[n]{||A_1^n||}$ coincides with $\sup \{|\lambda|: \lambda \in \sigma(A_1)\}$ which justifies the name spectral radius. It is easy to see that $r(A_1)<1$ if and only if $A_1$ is uniformly exponentially stable, i.e. there exist constants $a \geq0$, $0<b<1$ such that $||A_1^j||\leq a b^{j}$ for all $j \in \N$. The latter condition corresponds to condition $(c_1)$ on p. 74 in \cite{Bosq}.
Hence the condition $\sigma(A) \subset \{z \in \C: |z|<1\}$ is equivalent to the conditions in Lemma 3.1 of \cite{Bosq}.

We now give an extension of Theorem 1 in \cite{BrockLindVoll}:

\begin{theorem}\label{necessity}
Let $\mathcal{B}$ be a complex separable Banach space and let $(Z_t)_{t \in \Z}$ be an i.i.d. sequence of $\mathcal{B}$-valued random variables. Let $A_1$ and $B_0,\ldots,B_q$ be linear continuous operators in $\mathcal{B}$ and assume $\sigma(A_1) \cap \K = \emptyset$. Let $S$ be the operator as given above such that $S^{-1} A_1 S=\begin{pmatrix} \Lambda_1 & 0 \\ 0 & \Lambda_2 \end{pmatrix}$ . Then the ARMA($1$,$q$) equation (\ref{ARMA1q}) has a strictly stationary solution $Y=(Y_t)_{t \in \Z}$ if
\[\E \log^+ \Big |\Big| \Big( \sum_{k=0}^q A_1^{q-k} B_k \Big)Z_0\Big|\Big| < \infty.\]
This moment condition is also necessary under the additional assumption $0 \notin \sigma(A_1)$.
The strictly stationary solution is unique in both cases.
It is given by $Y_t=S X_t$ with
\[X_t^{(1)} = \sum_{j=0}^{q-1} (\sum_{k=0}^{j} \Lambda_1^{j-k} I_1 S^{-1} B_k)  Z_{t-j} + \sum_{j=q}^{\infty} \Lambda_1^{j-q}(\sum_{k=0}^{q} \Lambda_1^{q-k} I_1 S^{-1} B_k)  Z_{t-j}\]
and
\[X_t^{(2)} = - \sum_{j= 1-q}^{\infty} \Lambda_2^{-j-q} (\sum_{k=(1-j) \vee 0}^q \Lambda_2^{q-k} I_2 S^{-1}B_k)Z_{t+j},\]
where the series defining $X_t^{(1)}$ and $X_t^{(2)}$  converge almost surely absolutely.

\end{theorem}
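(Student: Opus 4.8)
The plan is to reduce everything to the two decoupled equations (\ref{ARMAcomponentwise}) and to treat the \emph{stable} block $i=1$ (where $r(\Lambda_1)<1$) and the \emph{anticipative} block $i=2$ (where $\sigma(\Lambda_2)\subset\{z:|z|>1\}$, so $\Lambda_2$ is invertible with $r(\Lambda_2^{-1})<1$) separately; by the equivalences recorded above, a strictly stationary solution of (\ref{ARMA1q}) exists (is unique) iff strictly stationary solutions of (\ref{ARMAcomponentwise}) exist (are unique) for $i=1,2$. Write $C:=\sum_{k=0}^qA_1^{q-k}B_k$. The analytic workhorse will be an elementary Borel--Cantelli estimate: for i.i.d. $\mathcal B$-valued $(\eta_m)_{m\ge0}$ and a bounded operator $T$ with $r(T)<1$, the condition $\E\log^+\|\eta_0\|<\infty$ implies $\sum_m T^m\eta_m$ converges a.s. absolutely (pick $b\in(r(T),1)$, $\lambda\in(1,1/b)$; then $\|\eta_m\|\le\lambda^m$ eventually a.s., $\|T^m\|\le c\,b^m$, and $\sum_m(b\lambda)^m<\infty$); conversely, if $T$ is moreover invertible---so that compactness of $\sigma(T)$ gives $\|T^m\|\ge c'\rho^{-m}$ for suitable $\rho<\infty$---then a.s. convergence of $\sum_m T^m\eta_m$ alone forces $\E\log^+\|\eta_0\|<\infty$ (the summands tend to $0$ a.s., hence $\|\eta_m\|=o(\rho^m)$, so $\PP(\|\eta_m\|>\rho^{m+1}\text{ i.o.})=0$, and Borel--Cantelli for the independent $(\eta_m)$ gives the moment).

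For sufficiency I would first note the identity $I_iS^{-1}A_1^{q-k}=\Lambda_i^{q-k}I_iS^{-1}$ (immediate from $S^{-1}A_1S=\operatorname{diag}(\Lambda_1,\Lambda_2)$), so that $\sum_k\Lambda_i^{q-k}I_iS^{-1}B_k=I_iS^{-1}C$; thus the tail coefficients in the stated formulae are $\Lambda_1^{j-q}I_1S^{-1}C$ and $-\Lambda_2^{-j-q}I_2S^{-1}C$ acting on the i.i.d. vectors $I_iS^{-1}CZ_{t\mp j}$, which have finite $\log^+$-moment since $\log^+\|I_iS^{-1}Cz\|\le\log^+\|I_iS^{-1}\|+\log^+\|Cz\|$ and $\E\log^+\|CZ_0\|<\infty$ by hypothesis. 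Applying the workhorse with $T=\Lambda_1$ (resp. $T=\Lambda_2^{-1}$) gives a.s. absolute convergence of the two series, the finitely many exceptional terms being harmless; a termwise reindexing (legitimate by absolute convergence) identifies them with $X_t^{(1)}=\sum_{m\ge0}\Lambda_1^m I_1S^{-1}\big(\sum_kB_kZ_{t-m-k}\big)$ and $X_t^{(2)}=-\sum_{m\ge1}\Lambda_2^{-m}I_2S^{-1}\big(\sum_kB_kZ_{t+m-k}\big)$, and a one-line telescoping shows these solve (\ref{ARMAcomponentwise}), whence $Y_t=SX_t$ solves (\ref{ARMA1q}). Strict stationarity is immediate because each $X_t^{(i)}$ is a fixed measurable, shift-equivariant functional of the i.i.d. sequence $(Z_s)_{s\in\Z}$. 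Uniqueness in both cases (not requiring $0\notin\sigma(A_1)$) follows from the homogeneous equation: the difference $D_t$ of two strictly stationary solutions satisfies $D_t=A_1^nD_{t-n}$, so $\|D_t^{(1)}\|\le\|\Lambda_1^n\|\,\|D_{t-n}^{(1)}\|\to0$ in probability because $(D_{t-n}^{(1)})_n$ is tight (each $Y_{t-n}$ and $Y'_{t-n}$ is distributed as at time $0$) and $\|\Lambda_1^n\|\to0$, forcing $D_t^{(1)}=0$ a.s.; likewise $D_{t-n}^{(2)}=\Lambda_2^{-n}D_t^{(2)}\to0$ in probability forces $D_t^{(2)}=0$ a.s.

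For necessity, suppose $Y$ is a strictly stationary solution. The key step is the substitution $\tilde Y_t:=Y_t-\sum_{j=0}^{q-1}\big(\sum_{k=0}^jA_1^{j-k}B_k\big)Z_{t-j}$: inserting it into (\ref{ARMA1q}) and using the recursion $D_0=B_0$, $D_j=B_j+A_1D_{j-1}$ one verifies that the noise terms $Z_t,\dots,Z_{t-q+1}$ cancel, leaving the pure AR($1$) equation $\tilde Y_t-A_1\tilde Y_{t-1}=CZ_{t-q}$ driven by the i.i.d. sequence $(CZ_{t-q})_t$. Projecting gives $\tilde Y_t^{(i)}-\Lambda_i\tilde Y_{t-1}^{(i)}=I_iS^{-1}CZ_{t-q}=:\tilde W_t^{(i)}$ with $(\tilde W_t^{(i)})_t$ i.i.d. On the anticipative block, forward iteration gives $\sum_{m=1}^n\Lambda_2^{-m}\tilde W_{t+m}^{(2)}=\Lambda_2^{-n}\tilde Y_{t+n}^{(2)}-\tilde Y_t^{(2)}$; the family $(\tilde Y_{t+n}^{(2)})_n$ is tight (here I use only that $Y$ and $(Z_s)_s$ are each strictly stationary, not joint stationarity) and $\|\Lambda_2^{-n}\|\to0$, so the partial sums of the \emph{independent} sequence $(\Lambda_2^{-m}\tilde W_{t+m}^{(2)})_{m\ge1}$ converge in probability, hence a.s. by the It\^o--Nisio theorem ($\mathcal B$ separable); since $\Lambda_2^{-1}$ is invertible, the converse half of the workhorse yields $\E\log^+\|\tilde W_0^{(2)}\|<\infty$. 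On the stable block the backward iteration $\sum_{m=0}^{n-1}\Lambda_1^m\tilde W_{t-m}^{(1)}=\tilde Y_t^{(1)}-\Lambda_1^n\tilde Y_{t-n}^{(1)}$ gives, by the same It\^o--Nisio argument, a.s. convergence of $\sum_m\Lambda_1^m\tilde W_{t-m}^{(1)}$; this is precisely where $0\notin\sigma(A_1)$ is used, for it makes $\Lambda_1$ invertible, so the converse half of the workhorse applies and gives $\E\log^+\|\tilde W_0^{(1)}\|<\infty$. Finally, with the max-norm on $\mathcal B_1\oplus\mathcal B_2$ one has $\log^+\|CZ_0\|\le\log^+\|S\|+\log^+\|I_1S^{-1}CZ_0\|+\log^+\|I_2S^{-1}CZ_0\|$, and $\tilde W_0^{(i)}\stackrel{d}{=}I_iS^{-1}CZ_0$, so taking expectations yields $\E\log^+\big\|\big(\sum_{k=0}^qA_1^{q-k}B_k\big)Z_0\big\|<\infty$.

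The step I expect to be the main obstacle is this necessity argument: one must recognise the AR($1$)-reduction and that its driving operator is exactly $C=\sum_kA_1^{q-k}B_k$, must then upgrade the only convergence that stationarity supplies for free---convergence \emph{in probability} of telescoped partial sums---to almost-sure convergence of a genuine series of independent summands via It\^o--Nisio, and must finally extract the $\log^+$-moment through a converse Borel--Cantelli estimate that fails unless $\Lambda_1$ (equivalently $A_1$) is invertible, which is what forces the hypothesis $0\notin\sigma(A_1)$. By comparison, the sufficiency statement and uniqueness are routine once the reindexing identities and the elementary ``root test'' are available.
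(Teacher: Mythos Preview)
Your proof is correct and follows essentially the same route as the paper: both arguments reduce to the decoupled equations via $S$, establish sufficiency through a Borel--Cantelli estimate exploiting $r(\Lambda_1)<1$ and $r(\Lambda_2^{-1})<1$, and prove necessity by iterating until the remainder terms vanish in probability, then invoking It\^o--Nisio in the separable Banach space to upgrade to almost-sure convergence before applying the converse Borel--Cantelli (this last step being exactly where invertibility of $A_1$, i.e.\ $0\notin\sigma(A_1)$, enters). Your packaging is slightly cleaner in two places: you isolate the forward/backward Borel--Cantelli estimates as a reusable ``workhorse'', and you perform the substitution $\tilde Y_t=Y_t-\sum_{j=0}^{q-1}\big(\sum_{k\le j}A_1^{j-k}B_k\big)Z_{t-j}$ to reduce to a pure AR($1$) equation $\tilde Y_t-A_1\tilde Y_{t-1}=CZ_{t-q}$ \emph{before} iterating, whereas the paper iterates the ARMA($1$,$q$) equation directly (its equation (\ref{iteratedARMA})) and has to carry along the extra boundary sum $\sum_{j=0}^{q-1}A_1^{n+j-q}(\cdots)Z_{t-(n+j)}$ and argue it too tends to zero in probability; your uniqueness via the homogeneous equation is likewise a bit more direct than the paper's, which instead identifies every solution with the explicit formula. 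These are organisational differences only---the underlying ideas, and in particular the precise identification of $C=\sum_kA_1^{q-k}B_k$ as the operator governing the moment condition, are the same.
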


\begin{proof}
The proof follows along the lines of the proofs of Theorem 1 and Corollary 1 in \cite{BrockLindVoll}. We first consider the case $\sigma(A_1) \subset \{z \in \C: |z|<1\}$, then the case $\sigma(A_1) \subset \{z \in \C: 1<|z|\}$ and finally the general case.

Case 1: $\sigma(A_1) \subset \{z \in \C: |z|<1\}$
i) Sufficiency: Assume that the moment condition is fulfilled. The sufficiency of the conditions follows along the lines of Section 3.2 in \cite{BrockLindVoll}. Note that because $r(A_1)<1$, there are $a>0$ and $0<b<1$ such that $||A_1^j||\leq a b^j$. We show that this moment condition is sufficient for the almost sure absolute convergence of
\[Y_t = \sum_{j=0}^{q-1} \Big( \sum_{k=0}^{j} A_1^{j-k} B_k\Big) Z_{t-j} + \sum_{j=q}^{\infty} A_1^{j-q} \Big( \sum_{k=0}^{q} A_1^{q-k} B_k\Big) Z_{t-j}.\]
By using the Borel-Cantelli-Lemma, we show that only finitely many summands have norm greater than $b'^j$ for $b<b'<1$:
\begin{eqnarray*}
&& \sum_{j=q}^{\infty} \PP\Big[\Big|\Big|A_1^{j-q}\Big(\sum_{k=0}^q A_1^{q-k} B_k \Big) Z_{t-j}\Big|\Big|>b'^j\Big]\\
&\leq& \sum_{j=q}^{\infty} \PP\Big[ab^{-q}\Big|\Big|\Big(\sum_{k=0}^q A_1^{q-k} B_k \Big) Z_{t-j}\Big|\Big|>(\frac{b'}{b})^j\Big]\\
&=& \sum_{j=q}^{\infty} \PP\Big[\log^+\Big( ab^{-q}\Big|\Big|\Big(\sum_{k=0}^q A_1^{q-k} B_k \Big) Z_{t-j}\Big|\Big|\Big)>j \log \frac{b'}{b}\Big].
\end{eqnarray*}
The last series is finite because of our moment assumption. Hence the series defining $Y_t$ converges almost surely absolutely. Obviously,
$(Y_t)_{t \in \Z}$ is a strictly stationary process and one can check that it defines a solution of (\ref{ARMA1q}).

ii) Necessity and uniqueness: Assume that there is a strictly stationary solution. By iterating the ARMA($1$,$q$) equation (\ref{ARMA1q}) (see equation (22) in \cite{BrockLindVoll}), we have
\begin{eqnarray}Y_t&=& \sum_{j=0}^{q-1} \Big(\sum_{k=0}^j A_1^{j-k} B_k\Big) Z_{t-j} + \sum_{j=q}^{n-1} A_1^{j-q}\Big(\sum_{k=0}^q A_1^{q-k} B_k\Big) Z_{t-j} \nonumber\\
&&+ \sum_{j=0}^{q-1} A_1^{n+j-q} \Big(\sum_{k=j+1}^q A_1^{q-k} B_k \Big) Z_{t-(n+j)} + A_1^{n} Y_{t-n}.\label{iteratedARMA}\end{eqnarray}

By taking the limit in probability as $n \to \infty$, the last two summands converge to $0$ in probability (as they converge to $0$ in distribution), since $(Y_t)_{t \in \Z}$ and $(Z_t)_{t \in \Z}$ are both strictly stationary. Hence we get 
\[Y_t =  \sum_{j=0}^{q-1} \Big(\sum_{k=0}^j A_1^{j-k} B_k\Big) Z_{t-j} + \PP-\lim_{n \to \infty} \sum_{j=q}^{n-1} A_1^{j-q}\Big(\sum_{k=0}^q A_1^{q-k} B_k\Big) Z_{t-j}.\] This shows uniqueness. Now assume $0 \notin \sigma(A_1)$.
The Itô-Nisio-Theorem is also valid in Banach spaces (see Theorem 6.1 in \cite{Ledoux}), therefore we also get almost sure convergence. Hence only finitely many summands may have norm greater than 1, which gives the last inequality of the following (in)equalities by the Borel-Cantelli-Lemma:
\begin{eqnarray*}
&&\sum_{j=q}^{\infty} \PP \Big[\Big|\Big| \Big(\sum_{k=0}^q A_1^{q-k} B_k\Big) Z_{0} \Big|\Big| >{\underbrace{||A_1^{-1}||}_{>1}}^{j-q} \Big] \\
&\leq& \sum_{j=q}^{\infty} \PP \Big[||A_1^{q-j}||\Big|\Big| A_1^{j-q}\Big(\sum_{k=0}^q A_1^{-k} B_k\Big) Z_{-j} \Big|\Big| >||A_1^{q-j}|| \Big] \\
&=&\sum_{j=q}^{\infty} \PP \Big[\Big|\Big| A_1^{j-q}\Big(\sum_{k=0}^q A_1^{q-k} B_k\Big) Z_{-j} \Big|\Big| >1 \Big] < \infty.
\end{eqnarray*}
This gives $\E \log^+ \Big |\Big| \Big( \sum_{k=0}^q A_1^{q-k} B_k \Big)Z_0\Big|\Big| < \infty$.

Case 2: $\sigma(A_1) \subset \{z \in \C: 1<|z|\}$
i) Sufficiency: Define
\begin{eqnarray*}
Y_u&:=& -\sum_{j=0}^{q-1} A_1^j \Big(\sum_{k=j+1}^q A_1^{-k} B_k\Big) Z_{u-j} - \sum_{j=-\infty}^{-1} A_1^{j}\Big(\sum_{k=0}^q A_1^{-k} B_k\Big) Z_{u-j}\\
&=&- \sum_{j=-\infty}^{q-1} A_1^{j}\Big(\sum_{k=\max(0,j+1)}^q A_1^{-k} B_k\Big) Z_{u-j}.
\end{eqnarray*}
As in case 1, it follow  from the moment condition that the defining series $Y_u$ converges almost surely absolutely  and by similar calculations it follows that $(Y_u)_{u \in \Z}$ is a strictly stationary solution of (\ref{ARMA1q}).

ii) Necessity and uniqueness: For obtaining the same result in the case $\sigma(A_1) \subset \{z \in \C: 1<|z|\}$, one proceeds as in subsection 3.1.2. in \cite{BrockLindVoll}: Note that $\sigma(A_1^{-1})\subset \{z \in \C: 0<|z|<1\}$ by holomorphic functional calculus. One obtains (see equation (25) in \cite{BrockLindVoll})
\begin{eqnarray*}Y_u&=& -\sum_{j=0}^{q-1} A_1^j \Big(\sum_{k=j+1}^q A_1^{-k} B_k\Big) Z_{u-j} - \sum_{j=1}^{n-q} A_1^{-j}\Big(\sum_{k=0}^q A_1^{-k} B_k\Big) Z_{u+j}\\
&&- \sum_{j=0}^{q-1} A_1^{-n+j} \Big(\sum_{k=0}^j A_1^{-k} B_k \Big) Z_{u+n-j} + A_1^{-n} Y_{u+n}.\end{eqnarray*}
Again, the last two summands converge to $0$ in probability as $n\to \infty$ and \[\sum_{j=1}^{\infty} A_1^{-j}\Big(\sum_{k=0}^q A_1^{-k} B_k\Big) Z_{u+j}\] has to converge almost surely. This shows uniqueness and gives the same necessary moment condition by the same argument.

The general case: By the discussion preceding Theorem \ref{necessity}, there exists a strictly stationary solution if and only if (\ref{ARMAcomponentwise}) admits a strictly stationary solution for $i=1,2$. By the cases 1 and 2, this holds if (and only if in the case $0 \notin \sigma(A_1)$)
\[\E \log^+ ||(\sum_{k=0}^q \Lambda_i^{q-k} I_i S^{-1} B_k)Z_0|| < \infty \quad \mbox{for }i=1,2\]  which is in turn equivalent to
\begin{eqnarray*}
\infty&>&\E \log^+ ||(\sum_{k=0}^q \begin{pmatrix} \Lambda_1 & 0 \\ 0 & \Lambda_2 \end{pmatrix}^{q-k} S^{-1} B_k)Z_0||\\
&=&\E \log^+ ||(\sum_{k=0}^q (S^{-1}A_1S)^{q-k} S^{-1} B_k)Z_0||\\
&=&\E \log^+ ||(\sum_{k=0}^q S^{-1} A_1^{q-k}  B_k)Z_0||.
\end{eqnarray*}
Finally, this is true if and only if $\E \log^+ ||(\sum_{k=0}^q A_1^{q-k}  B_k)Z_0||<\infty$ because $S$ is invertible and continuous.
The uniqueness and the specific form of the solutions are clear from the cases 1 and 2 and the discussing preceding Theorem \ref{necessity}.
\end{proof}

One can hope that one can also use a continuous operator $A_1$ with the property that $\lim_{n \to \infty} A_1^n x=0$ for all $x \in \mathcal{B}$ as we then still have $A_1^n Z \stackrel{d}{\rightarrow}0$ for a random variable $Z$ and hence for $A_1^n Z_{t-n}$ for a strictly stationary process $(Z_t)_{t \in \Z}$. An operator with this property is called \textit{strongly stable}, see \cite{Eisner}, Definition 2.1. There are strongly stable operators with spectral radius one. For example, consider a multiplication operator $A_1$ on $\ell^2(\N)$ defined by $A_1(x_0,x_1, \ldots) = (\lambda_0 x_0, \lambda_1 x_1, \ldots)$. Recall that $\sigma(A_1)=\overline{\{\lambda_i,i \in \N\}}$. If we choose a sequence $0<\lambda_i <1$ tending to 1, then $r(A_1)=1$, but $A_1$ is strongly stable. However, if $r(A_1)=1$, then $||A_1^n||$ does not converge exponentially fast to zero and hence sufficient $\log^+$-moment conditions cannot be derived in this way. Still, we have the following:

\begin{coro} Let $\mathcal{B}$ be a complex separable Banach space and let $(Z_t)_{t \in \Z}$ be an i.i.d. sequence of $\mathcal{B}$-valued random variables. Let $A_1$ and $B_0,\ldots,B_q$ be linear continuous operators in $\mathcal{B}$. Further assume that $A_1$ is strongly stable. Then the ARMA($1$,$q$) equation (\ref{ARMA1q}) has a strictly stationary solution $Y=(Y_t)_{t \in \Z}$ if and only if
\[\PP-\lim_{n \to \infty} \sum_{j=q}^{n-1} A_1^{j-q}\Big(\sum_{k=0}^q A_1^{q-k} B_k\Big) Z_{t-j} \mbox{ exists.}\]
If there is a strictly stationary solution, it is unique and is given by
\[Y_t =  \sum_{j=0}^{q-1} \Big(\sum_{k=0}^j A_1^{j-k} B_k\Big) Z_{t-j} + \PP-\lim_{n \to \infty} \sum_{j=q}^{n-1} A_1^{j-q}\Big(\sum_{k=0}^q A_1^{q-k} B_k\Big) Z_{t-j}.\]
\end{coro}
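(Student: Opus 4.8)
The plan is to rerun the necessity/uniqueness argument of Case~1 of Theorem~\ref{necessity}, but to replace the It\^o--Nisio step (which needed a $\log^+$-moment to upgrade convergence in probability to almost sure convergence) by a soft argument exploiting strong stability directly; the sufficiency direction is then obtained by reversing this computation.

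\emph{Necessity and uniqueness.} Suppose $(Y_t)_{t\in\Z}$ is a strictly stationary solution. Iterating (\ref{ARMA1q}) exactly as in (\ref{iteratedARMA}) gives, for every $n$,
\[Y_t= \sum_{j=0}^{q-1}\Big(\sum_{k=0}^j A_1^{j-k}B_k\Big)Z_{t-j}+\sum_{j=q}^{n-1}A_1^{j-q}\Big(\sum_{k=0}^q A_1^{q-k}B_k\Big)Z_{t-j}+R_n+A_1^nY_{t-n},\]
where $R_n:=\sum_{j=0}^{q-1}A_1^{n+j-q}\big(\sum_{k=j+1}^q A_1^{q-k}B_k\big)Z_{t-(n+j)}$. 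Each of the finitely many summands of $R_n$ is, by strict stationarity of $(Z_t)$, equal in distribution to $A_1^{n+j-q}W$ for a fixed $\mathcal{B}$-valued random variable $W$; since $A_1$ is strongly stable, $A_1^mW(\omega)\to0$ for every $\omega$, so $A_1^mW\to0$ almost surely, hence the summand tends to $0$ in distribution and, the limit being deterministic, in probability. Thus $R_n\to0$ in probability, and the same argument applied to $A_1^nY_{t-n}$ (using strict stationarity of $(Y_t)$) gives $A_1^nY_{t-n}\to0$ in probability. Passing to the limit in probability in the displayed identity then shows that $\sum_{j=q}^{n-1}A_1^{j-q}\big(\sum_{k=0}^q A_1^{q-k}B_k\big)Z_{t-j}$ converges in probability, with limit $Y_t-\sum_{j=0}^{q-1}\big(\sum_{k=0}^j A_1^{j-k}B_k\big)Z_{t-j}$. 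This is the asserted necessary condition; since the right-hand side determines $Y_t$ uniquely from $(Z_s)_{s\in\Z}$, it also yields uniqueness and the claimed formula.

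\emph{Sufficiency.} Conversely, assume $L_t:=\PP\text{-}\lim_{n}\sum_{j=q}^{n-1}A_1^{j-q}\big(\sum_{k=0}^q A_1^{q-k}B_k\big)Z_{t-j}$ exists and set $Y_t:=\sum_{j=0}^{q-1}\big(\sum_{k=0}^j A_1^{j-k}B_k\big)Z_{t-j}+L_t$. For strict stationarity, observe that for each fixed $n$ the $n$-th partial sum is a fixed continuous linear function of $(Z_{t-q},\dots,Z_{t-n+1})$, so the finite-dimensional distributions of the truncated family $(Y_t^{(n)},Z_t)_{t\in\Z}$ are shift-invariant; convergence in probability is preserved under shifts and passes to joint convergence over finitely many coordinates, hence to convergence in distribution, so the shift-invariance survives in the limit and $(Y_t)_{t\in\Z}$ is strictly stationary. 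To see that it solves (\ref{ARMA1q}), use that $A_1$ is continuous, hence commutes with $\PP$-limits, to write $A_1L_{t-1}=\PP\text{-}\lim_n\sum_{j=q+1}^{n}A_1^{j-q}\big(\sum_{k=0}^q A_1^{q-k}B_k\big)Z_{t-j}$; subtracting the two series telescopes to $\big(\sum_{k=0}^q A_1^{q-k}B_k\big)Z_{t-q}-A_1^{n-q}\big(\sum_{k=0}^q A_1^{q-k}B_k\big)Z_{t-n}$, whose second term vanishes in probability by the strong-stability argument above, so $L_t-A_1L_{t-1}=\big(\sum_{k=0}^q A_1^{q-k}B_k\big)Z_{t-q}$. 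Combined with the elementary identity
\[\sum_{j=0}^{q-1}\Big(\sum_{k=0}^j A_1^{j-k}B_k\Big)Z_{t-j}-A_1\sum_{j=0}^{q-1}\Big(\sum_{k=0}^j A_1^{j-k}B_k\Big)Z_{t-1-j}=\sum_{j=0}^{q-1}B_jZ_{t-j}-\Big(\sum_{k=0}^{q-1}A_1^{q-k}B_k\Big)Z_{t-q},\]
one obtains $Y_t-A_1Y_{t-1}=\sum_{j=0}^{q}B_jZ_{t-j}$, as required.

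\emph{Main obstacle.} The only step that is not bookkeeping is the passage from ``$A_1^nx\to0$ for every $x\in\mathcal{B}$'' to ``$A_1^nW_n\to0$ in probability'' for identically distributed $\mathcal{B}$-valued $W_n$: reduce to a fixed $W$ by equality in distribution, use the surely-pointwise convergence to get almost sure convergence of $A_1^nW$, then downgrade to convergence in probability because the limit is the constant $0$. This is where strong stability does the work in place of a spectral or moment hypothesis; everything else — interchanging $A_1$ with $\PP$-limits, the telescoping, the finite-sum identity, and pushing shift-invariance through a limit in probability — is routine.
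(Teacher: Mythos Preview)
Your argument is correct and follows essentially the same route as the paper's proof: iterate the equation to (\ref{iteratedARMA}), use strong stability to kill the remainder terms $R_n$ and $A_1^nY_{t-n}$ in probability (exactly the observation the paper makes just before the corollary), and for sufficiency check that the $\PP$-limit defines a strictly stationary solution by pushing $A_1$ through the limit and telescoping. The paper compresses the sufficiency step into a single sentence (``convergence in probability is preserved by application of continuous operators''), whereas you spell out the telescoping and the finite-sum identity; apart from a harmless index slip (the $n$-th partial sum depends on $Z_t,\dots,Z_{t-n+1}$, not $Z_{t-q},\dots,Z_{t-n+1}$), your version is a faithful expansion of theirs.
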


\begin{proof}
The necessity and uniqueness follows as in the proof of Theorem \ref{necessity}-case 1 since the strong stability implies that $\sum_{j=0}^{q-1} A_1^{n+j-q} \Big(\sum_{k=j+1}^q A_1^{q-k} B_k \Big) Z_{t-(n+j)}\stackrel{\PP}{\rightarrow}0$ and  $A_1^{n} Y_{t-n}\stackrel{\PP}{\rightarrow}0$ as $n \to \infty$ as observed above, so that (\ref{iteratedARMA}) gives the desired convergence conditions. The sufficiency follows from the same arguments as given in the proof of Theorem \ref{necessity} since convergence in probability is preserved by application of continuous operators.
\end{proof}
An example of a strictly stationary AR($1$) process with strongly stable $A_1$ but with $r(A_1)=1$ is given in Proposition \ref{stronglystablear}.

We can extend our results to ARMA($p$,$q$) processes by using the representation as an ARMA($1$,$q$) process with new state space $\mathcal{B}^p$ endowed with a suitable norm, e.g. the maximum of the norms of the components. We follow here Section 5.1 of Bosq \cite{Bosq}. If we have the ARMA($p$,$q$) equation $Y_t - A_1 Y_{t-1} - \ldots - A_p Y_{t-p} = B_0 Z_t+B_1 Z_{t-1}+\ldots+B_q Z_{t-q}$, we set
\begin{eqnarray}
\widetilde{Y}_t&=&(Y_t,\ldots, Y_{t-p+1})^T,\nonumber\\ 
\widetilde{B_k}&=&(B_k,0,\ldots,0)^T,\label{newOperator1}\\
\widetilde{Z_t}&=&(Z_t,0,\ldots,0)^T. \nonumber
\end{eqnarray}
We further define the operator $A$ on $\mathcal{B}^p$ by
\begin{equation}A= \begin{pmatrix}A_1 & A_2 & \cdots & \cdots & A_p \\ \id & 0 & \cdots & \cdots & 0\\ 0 & \id & \cdots & \cdots & 0 \\ \vdots & \vdots & \ddots & \vdots & \vdots \\ 0 & \cdots & \cdots & \id & 0 \end{pmatrix}.\label{newOperator2}\end{equation}
We can now regard the old ARMA($p$,$q$) equation as the new ARMA($1$,$q$) equation
\begin{equation}\widetilde{Y_t}-A \widetilde{Y_{t-1}} = \widetilde{B_0} \widetilde{Z_t}+\widetilde{B_1} \widetilde{Z_{t-1}}+\ldots+\widetilde{B_q} \widetilde{Z_{t-q}}.\label{newARMA}\end{equation}
Note that $(\widetilde{Z_t})_{t \in \Z}$ is also strict white noise.
We now consider the operator-valued polynomial $Q$ defined by
\begin{equation}Q(z)= z^p \id - z^{p-1} A_1 - \cdots -z A_{p-1} - A_{p}.\label{operatorvaluedpolynomial}\end{equation}

\begin{lemma} We have \[\sigma(A):= \{z \in \C| z \id - A \mbox{ is not invertible}\} = \{z \in \C|Q(z) \mbox{ is not invertible}\}.\] \end{lemma}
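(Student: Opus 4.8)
The plan is to prove, for each fixed $z\in\C$, that $z\id-A$ is invertible on $\mathcal{B}^p$ if and only if $Q(z)$ is invertible on $\mathcal{B}$; since all operators involved are bounded on Banach spaces, invertibility is equivalent to bijectivity by the open mapping theorem, so it suffices to compare kernels and ranges. This is the operator-valued analogue of the classical fact that for a scalar companion matrix $C$ the determinant of $z\id-C$ equals the characteristic polynomial $z^p-a_1z^{p-1}-\cdots-a_p$, and I would establish it by carrying out the corresponding block row reduction; concretely the cleanest route is to solve the linear system $(z\id-A)\mathbf{x}=\mathbf{y}$ directly.

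Write $\mathbf{x}=(x_1,\ldots,x_p)^T$ and $\mathbf{y}=(y_1,\ldots,y_p)^T$ in $\mathcal{B}^p$. Reading off rows $2,\ldots,p$ of $z\id-A$ from (\ref{newOperator2}), the equation $(z\id-A)\mathbf{x}=\mathbf{y}$ is equivalent to the relations $x_{k-1}=zx_k-y_k$ for $k=2,\ldots,p$ together with $(z\id-A_1)x_1-A_2x_2-\cdots-A_px_p=y_1$. Downward substitution in the first set of relations expresses each $x_k$ with $1\le k\le p-1$ as the bounded linear function $x_k=z^{p-k}x_p-\sum_{i=k+1}^{p}z^{i-k-1}y_i$ of $x_p$ and $y_{k+1},\ldots,y_p$. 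Plugging these into the last relation, the terms carrying $x_p$ collapse to $\big(z^p\id-z^{p-1}A_1-\cdots-A_p\big)x_p=Q(z)x_p$, while the remaining terms form a bounded linear expression $\Phi(\mathbf{y})$ in $y_1,\ldots,y_p$ with the property that $\Phi(y_1,0,\ldots,0)=y_1$, so $\Phi\colon\mathcal{B}^p\to\mathcal{B}$ is surjective. Hence $(z\id-A)\mathbf{x}=\mathbf{y}$ holds if and only if $Q(z)x_p=\Phi(\mathbf{y})$ and $x_1,\ldots,x_{p-1}$ are given by the substitution formulas above.

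The conclusion is then immediate. If $Q(z)$ is bijective, then for any $\mathbf{y}$ the element $x_p:=Q(z)^{-1}\Phi(\mathbf{y})$ together with the substitution formulas is the unique solution $\mathbf{x}$, so $z\id-A$ is bijective. Conversely, if $z\id-A$ is bijective, then taking $\mathbf{y}$ with only the first component nonzero shows $Q(z)$ is onto $\mathcal{B}$, and if $Q(z)x_p=0$ with $x_p\neq0$ then $(z^{p-1}x_p,\ldots,zx_p,x_p)^T$ is a nonzero kernel element of $z\id-A$ (obtained by solving with $\mathbf{y}=0$), a contradiction, so $Q(z)$ is injective. Equivalently, the substitution can be packaged as a factorisation $z\id-A=L\,\mathrm{diag}(Q(z),\id,\ldots,\id)\,U$ with $L$ and $U$ invertible block-triangular operators on $\mathcal{B}^p$. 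The only point needing care is that the operator entries do not commute: one must check that the $x_p$-terms really reassemble into $Q(z)$ in the correct order, and that the substitution never divides by $z$ (so $z=0$ is not exceptional). Both hold because the blocks $z\id$ and $\id$ appearing in rows $2,\ldots,p$ are central, which is exactly where the companion shape of $A$ enters; this is the main, and rather mild, obstacle in the argument.
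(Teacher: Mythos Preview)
Your argument is correct and is essentially the approach the paper invokes: the paper simply cites Bosq's Theorem~5.2, whose proof carries out precisely this block elimination, packaged as the factorisation $M(z)(z\id-A)N(z)=\mathrm{diag}(\id,\ldots,\id,Q(z))$ that you mention at the end (cf.\ the proof of Theorem~\ref{sufficieny} in the paper, where this factorisation is written out explicitly). Your direct solution of the linear system and the resulting $LDU$ factorisation are the same computation as Bosq's, with $Q(z)$ placed in a different diagonal slot.
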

\begin{proof} This follows from the proof of Theorem 5.2 p. 130  in Bosq \cite{Bosq}. Bosq states only the inclusion $\subset$ in his proof but his arguments also give equality. \end{proof}

We now apply this representation to Theorem \ref{necessity}:

\begin{theorem}
Let $A_1,\cdots,A_p$ and $B_0,\cdots,B_q$ be continuous linear operators in a separable complex Banach space $\mathcal{B}$. Let $(Z_t)_{t \in \Z}$ be an i.i.d. sequence of $\mathcal{B}$-valued random variables. Define $Q$ as in (\ref{operatorvaluedpolynomial}) and assume that $Q(z)$ is invertible for all $z \in \C$ with $|z|=1$. Define $\widetilde{B_k}$, $\widetilde{Z_t}$ and $A$ as in (\ref{newOperator1}) and (\ref{newOperator2}). Then a strictly stationary solution of the ARMA($p$,$q$) equation (\ref{ARMAEQ}) exists if
\begin{equation}\E \log^+ \Big |\Big| \Big( \sum_{k=0}^q A^{q-k} \widetilde{B_k} \Big)\widetilde{Z_0}\Big|\Big| < \infty.\label{newmoment}\end{equation}
If $Q(0)$ is also invertible, then this moment condition is also necessary.
The stationary solution is unique.
\end{theorem}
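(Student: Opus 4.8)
The plan is to read the assertion off Theorem \ref{necessity} applied to the companion ARMA($1$,$q$) equation (\ref{newARMA}) on the enlarged state space $\mathcal{B}^p$. First I would dispose of the book-keeping: $\mathcal{B}^p$, equipped with the maximum norm, is again a separable complex Banach space; the operators $A$ and $\widetilde{B_0},\ldots,\widetilde{B_q}$ from (\ref{newOperator1})--(\ref{newOperator2}) are continuous and linear on it; and $(\widetilde{Z_t})_{t\in\Z}$ is i.i.d., since each $\widetilde{Z_t}=(Z_t,0,\ldots,0)^T$ is a fixed measurable image of $Z_t$. By the Lemma above, the hypothesis ``$Q(z)$ invertible for all $|z|=1$'' is exactly $\sigma(A)\cap\K=\emptyset$, so $A$ is hyperbolic and the diagonalisation preceding Theorem \ref{necessity} applies; likewise ``$Q(0)$ invertible'' is exactly $0\notin\sigma(A)$.

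Next I would establish the correspondence between strictly stationary solutions of (\ref{ARMAEQ}) and of (\ref{newARMA}). If $(Y_t)_{t\in\Z}$ solves (\ref{ARMAEQ}), then $\widetilde{Y_t}:=(Y_t,\ldots,Y_{t-p+1})^T$ solves (\ref{newARMA}): its first block equation is (\ref{ARMAEQ}) itself, while for $i\ge 2$ the $i$-th block equation collapses to the identity $Y_{t-i+1}=Y_{t-i+1}$. Conversely, writing any solution of (\ref{newARMA}) as $\widetilde{Y_t}=(Y^{(1)}_t,\ldots,Y^{(p)}_t)^T$, the lower $p-1$ block equations force $Y^{(i)}_t=Y^{(1)}_{t-i+1}$, so setting $Y_t:=Y^{(1)}_t$ recovers $\widetilde{Y_t}=(Y_t,\ldots,Y_{t-p+1})^T$, and the first block equation reads (\ref{ARMAEQ}). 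In each direction the passage goes through a fixed continuous map applied coordinatewise to a shift of the process, so strict stationarity is preserved both ways (whether one reads ``strictly stationary solution'' as strict stationarity of $(Y_t)_t$ alone or of $(Y_t,Z_t)_t$ jointly); in particular this is a bijection on strictly stationary solutions, and uniqueness on one side is equivalent to uniqueness on the other.

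Finally I would invoke Theorem \ref{necessity} for equation (\ref{newARMA}), with $\mathcal{B}$, $A_1$, $B_k$, $Z_t$ replaced by $\mathcal{B}^p$, $A$, $\widetilde{B_k}$, $\widetilde{Z_t}$. It yields: (\ref{newARMA}) has a strictly stationary solution whenever $\E\log^+\|(\sum_{k=0}^q A^{q-k}\widetilde{B_k})\widetilde{Z_0}\|<\infty$, which is precisely the condition (\ref{newmoment}); this moment condition is moreover necessary once $0\notin\sigma(A)$, i.e. once $Q(0)$ is invertible; and the strictly stationary solution is unique whenever it exists. Transporting these three facts along the correspondence of the previous paragraph gives exactly the claims of the theorem. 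If one also wanted the explicit form of the unique solution, it can be obtained by substituting $A$, $\widetilde{B_k}$, $\widetilde{Z_t}$ into the formulas for $X^{(1)}_t$, $X^{(2)}_t$ in Theorem \ref{necessity} and then projecting onto the first coordinate.

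The main obstacle is really just the solution correspondence: verifying that \emph{every} strictly stationary solution of the companion equation is automatically of companion form $(Y_t,\ldots,Y_{t-p+1})^T$, and that strict stationarity — and with it uniqueness — survives in both directions under the chosen reading of ``strictly stationary solution''. Once that is settled, the rest is a direct substitution into the already-proven Theorem \ref{necessity}, combined with the Lemma identifying $\sigma(A)$ with the set on which $Q$ fails to be invertible.
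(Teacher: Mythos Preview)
Your proposal is correct and follows essentially the same approach as the paper's own proof: reduce to the companion ARMA($1$,$q$) equation on $\mathcal{B}^p$, identify $\sigma(A)$ with the non-invertibility set of $Q$ via the Lemma, and invoke Theorem~\ref{necessity}. You are simply more explicit than the paper about the book-keeping (separability of $\mathcal{B}^p$, the bijection between strictly stationary solutions and why every companion solution is automatically of companion form), which the paper compresses into a single sentence.
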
 

\begin{proof}
There is a solution for the ARMA($p$,$q$) equation (\ref{ARMAEQ}) if and only if there is a solution for the rewritten ARMA($1$,$q$) equation (\ref{newARMA}). The spectrum  $\sigma(A)$ of $A$ is $\{z \in \C|Q(z) \mbox{ is not invertible}\}$. Hence by Theorem \ref{necessity}, a strictly stationary solution exists if (and only if in the case $Q(0)$ is invertible) (\ref{newmoment}) holds. If there is a solution for the rewritten ARMA($1$,$q$) equation, it is unique and hence so is the corresponding solution of the ARMA($p$,$q$) equation that is given by its first component.\end{proof}

The theory of holomorphic functions extends to holomorphic operator-valued functions, see e.g. \cite{Conway}, VII §4. Especially, if we have an operator-valued function $f$ that is holomorphic on an annulus around the unit circle, then it has a representation as a Laurent series $\sum_{n=-\infty}^{\infty} z^n \phi_n$. The operators $\phi_n$ are given by $\phi_n = \frac{1}{2\pi} \int_{\gamma} z^{-n-1} f(z) \, dz$, where $\gamma$ is a path following the unit circle, see \cite{Conway}, VII Lemma 6.11. Actually, VII Lemma 6.11 in \cite{Conway} is formulated only for a punctured disc, but the proof of the Laurent Series development 1.11 in \cite{Conway2} only makes use of Cauchy's Theorem and Cauchy's Integral Formula. These are also true for operator-valued holomorphic functions, see \cite{Conway}, §4.1 and §4.2.

We now give another representation of the unique strictly stationary solution:

\begin{theorem}\label{sufficieny}
Let $(Z_t)_{t \in \Z}$ be strict white noise in a separable complex Banach space $\mathcal{B}$ satisfying $\E \log^+ ||Z_0|| < \infty$. Let $A_1,\cdots,A_p$ and $B_0,\cdots,B_q$ be continuous linear operators and let $Q$ be defined by (\ref{operatorvaluedpolynomial}). Assume $Q(z)$ is invertible for all $z \in \C$ with $|z|=1$. Then the unique strictly stationary solution $Y=(Y_t)_{t \in \Z}$ of the ARMA($p$,$q$) equation (\ref{ARMAEQ}) is given by
\[ Y_t = \sum_{k=-\infty}^{\infty} \psi_k Z_{t-k},\]
where the series converges almost surely absolutely and where $(\psi_k)_{k \in \Z}$ denote the coefficients of the Laurent series of $(\id-zA_1-z^2 A_2-\cdots z^p A_p)^{-1}(B_0+zB_1 + \ldots + z^q B_q)$ for an annulus containing the unit circle $\K$. The coefficients $(\psi_k)_{k \in \Z}$ are given by $\psi_n = \frac{1}{2\pi} \int_{\gamma} z^{-n-1} (\id-zA_1-z^2 A_2-\cdots z^p A_p)^{-1}(B_0+zB_1 + \ldots + z^q B_q)  \, dz$.
\end{theorem}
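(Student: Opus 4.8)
My plan is to reduce the $\mathrm{ARMA}(p,q)$ case to the $\mathrm{ARMA}(1,q)$ case via the state-space representation, and then to identify the Laurent coefficients $(\psi_k)$ of the rational operator function with the coefficient blocks appearing in the explicit solution formula of Theorem~\ref{necessity}. First I would observe that, since $Q(z)$ is invertible on the unit circle, by Lemma~1 the operator $A$ of \eqref{newOperator2} satisfies $\sigma(A)\cap\K=\emptyset$, so $A$ is hyperbolic and the diagonalisation $S^{-1}AS=\mathrm{diag}(\Lambda_1,\Lambda_2)$ is available on $\mathcal{B}^p=\mathcal{B}_1\oplus\mathcal{B}_2$. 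The assumption $\E\log^+\|Z_0\|<\infty$ together with continuity of all the operators involved gives $\E\log^+\|(\sum_{k=0}^q A^{q-k}\widetilde{B_k})\widetilde{Z_0}\|<\infty$, so the previous theorem applies and yields a unique strictly stationary solution $\widetilde{Y}_t=SX_t$ with the $X_t^{(1)},X_t^{(2)}$ given there; the solution $Y_t$ of \eqref{ARMAEQ} is the first component of $\widetilde{Y}_t$. Writing this out, $Y_t$ is an a.s.\ absolutely convergent series $\sum_{k\in\Z}\widetilde{\psi}_k\widetilde{Z}_{t-k}$ for suitable operators $\widetilde{\psi}_k:\mathcal{B}^p\to\mathcal{B}$, and because $\widetilde{Z}_{t-k}=(Z_{t-k},0,\ldots,0)^T$ only the first column of $\widetilde{\psi}_k$ matters; calling that column $\psi_k:\mathcal{B}\to\mathcal{B}$ we get $Y_t=\sum_{k\in\Z}\psi_k Z_{t-k}$ with a.s.\ absolute convergence, which is the stated form modulo identifying $\psi_k$ with the claimed Laurent coefficients.

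The second half of the argument is the identification $\psi_n=\frac{1}{2\pi}\int_\gamma z^{-n-1}(\id-zA_1-\cdots-z^pA_p)^{-1}(B_0+\cdots+z^qB_q)\,dz$. Here I would argue spectrally rather than by matching series term by term. The rational operator function $F(z):=(\id-zA_1-\cdots-z^pA_p)^{-1}(B_0+\cdots+z^qB_q)$ is holomorphic on an annulus containing $\K$: indeed $\id-zA_1-\cdots-z^pA_p = z^p\cdot(z^{-1})^{-p}\cdot$ nothing — more carefully, for $z\neq0$ one has $\id-zA_1-\cdots-z^pA_p = z^p Q(1/z)$ up to the substitution, and invertibility of $Q$ on $|w|=1$ is exactly invertibility of $\id-zA_1-\cdots-z^pA_p$ on $|z|=1$, so by the usual openness/holomorphy of the set where an operator-valued holomorphic function is invertible (Conway VII), $F$ is holomorphic on some annulus $\{1-\varepsilon<|z|<1+\varepsilon\}$. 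By the Laurent expansion for operator-valued holomorphic functions quoted just before the theorem, $F(z)=\sum_{n\in\Z}z^n\psi_n$ there, with the $\psi_n$ given by the stated contour integrals. It then remains to check that these $\psi_n$ are the same operators as the $\psi_k$ produced in the first paragraph from Theorem~\ref{necessity}. The cleanest route: plug $Y_t=\sum_k\psi_k Z_{t-k}$ (with the contour-integral $\psi_k$) formally into \eqref{ARMAEQ}, use $\sum_{j=0}^p(-A_j)\psi_{k-j}$ (with $A_0:=-\id$, $A_j:=0$ for $j>p$) and compare with the right-hand side coefficients $B_k$; the defining relation $(\id-zA_1-\cdots-z^pA_p)F(z)=B_0+\cdots+z^qB_q$ gives precisely $\psi_k-A_1\psi_{k-1}-\cdots-A_p\psi_{k-p}=B_k$ for $0\le k\le q$ and $=0$ otherwise, after multiplying both sides by $z^{-n-1}$ and integrating over $\gamma$. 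Hence $(\psi_k)$ satisfies the ARMA coefficient recursion, the series $\sum_k\psi_k Z_{t-k}$ defines a strictly stationary solution (absolute convergence must be verified), and by the uniqueness already established it must coincide with the solution of the first paragraph.

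For the a.s.\ absolute convergence I would estimate the Laurent coefficients: since $F$ is holomorphic on the annulus $1-\varepsilon<|z|<1+\varepsilon$, Cauchy's estimates on operator-valued holomorphic functions give $\|\psi_n\|\le C\rho^{|n|}$ for some $0<\rho<1$ (take $\rho=\max\{(1+\varepsilon/2)^{-1},(1-\varepsilon/2)\}$ essentially), i.e.\ geometric decay of $\|\psi_n\|$ in both directions. Then, exactly as in the proof of Theorem~\ref{necessity}, one applies the Borel--Cantelli lemma: for $\rho<\rho'<1$, $\sum_n\PP[\|\psi_n Z_{t-n}\|>\rho'^{|n|}]\le\sum_n\PP[\log^+(C\|Z_{t-n}\|)>|n|\log(\rho'/\rho)]<\infty$ by $\E\log^+\|Z_0\|<\infty$ and stationarity, so only finitely many terms exceed $\rho'^{|n|}$ and $\sum_n\|\psi_n Z_{t-n}\|<\infty$ a.s. That the resulting process is strictly stationary is immediate from the i.i.d.\ structure, and being a solution follows from the coefficient recursion above (the tail terms vanish since the sums are now genuinely convergent, not just limits in probability). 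Uniqueness is inherited from the previous theorem.

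The main obstacle, I expect, is the bookkeeping in the identification step — keeping the substitution $z\leftrightarrow 1/z$ and the direction of the Laurent expansion straight, and making sure the "multiply by $z^{-n-1}$ and integrate" manipulation is justified for operator-valued integrands (it is, by Conway VII §4, but it needs to be invoked cleanly). The convergence and stationarity parts are routine given the geometric decay estimate, which itself is a standard Cauchy estimate on an annulus.
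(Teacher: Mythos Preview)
Your proposal is correct and follows the same overall strategy as the paper: establish that $F(z)=(\id-zA_1-\cdots-z^pA_p)^{-1}(B_0+\cdots+z^qB_q)$ is holomorphic on an annulus containing $\K$, deduce geometric decay $\|\psi_n\|\le C\rho^{|n|}$ from the Laurent expansion there, use Borel--Cantelli together with $\E\log^+\|Z_0\|<\infty$ for almost sure absolute convergence, check that the resulting process solves the ARMA equation, and invoke the uniqueness already obtained in the earlier theorems.

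The one genuine difference is the holomorphy step. You argue directly that $\id-zA_1-\cdots-z^pA_p$ is an operator-valued polynomial, invertible on $|z|=1$, and invoke the standard fact that the invertibility set of a holomorphic operator function is open with holomorphic inverse. The paper instead goes through the block-matrix factorisation $M(z)(z\,\id-A)N(z)=R(z)$ from the proof of the spectrum lemma, so that holomorphy of $Q(z)^{-1}$ is inherited from holomorphy of the resolvent $(z\,\id-A)^{-1}$ together with explicit polynomial inverses of $M(z)$ and $N(z)$. Your route is shorter and more transparent; the paper's route has the mild advantage of recycling a computation already done, and of making the link to the companion operator $A$ explicit. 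Your extra first paragraph---pulling a solution out of Theorem~\ref{necessity} via the state-space embedding and then matching it with the Laurent construction by uniqueness---is not in the paper (which goes straight to the Laurent series), but it is correct and arguably makes the role of uniqueness more visible.
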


\begin{proof}
We first show that $Q(z)^{-1}$ is a holomorphic function. For that, observe that from the proof of Theorem 5.2 in \cite{Bosq} it holds
\[M(z)(z \id - A)N(z)=\begin{pmatrix} \id & 0 & \ldots & 0 & 0\\ 0 & \id & \ldots & \vdots & \vdots\\ \vdots & & & \id &0\\ 0 & \ldots & \ldots & 0 & Q(z)\end{pmatrix}=:R(z).\]
Here $N(z)$ is a suitable upper triangular matrix with diagonal entries $\id$ and upper entries of the form $z^k \id$. The matrix $M$ is given by
\[M(z)=\begin{pmatrix} 0 & -\id & 0 & \ldots & 0\\ 0 & 0 & -\id & \ldots & 0\\ \vdots & & &  &\\ 0 & \ldots & \ldots & 0 & -\id\\ \id&Q_1(z)&\ldots&\ldots& Q_{p-1}(z)\end{pmatrix},\]
where $Q_1(z),\ldots,Q_{p-1}(z)$ are operator-valued polynomials.
The inverse of $M$ is given by
\[M(z)^{-1}=\begin{pmatrix}  Q_1(z)& \ldots & \ldots& Q_{p-1}(z) & \id\\ -\id & 0 & 0 & \ldots & 0\\ 0 & \ddots & &  &\\ 0 & \ldots & -\id & 0 & 0\\ 0&\ldots&\ldots &-\id&0\end{pmatrix}.\]
Thus $M(z)^{-1}$ is a holomorphic function. The inverse of $N(z)$ can be easily given by Cramer's rule and is hence holomorphic as well. The inverse of $(z \id - A)$ is the resolvent of $A$ and it is known that the resolvent is a holomorphic function. Hence $R(z)^{-1}$ is holomorphic and hence so is $Q(z)^{-1}$.

Now observe that $(\id-zA_1-z^2 A_2-\cdots z^p A_p)^{-1}=(z^pQ(z^{-1}))^{-1}$. Hence this function is holomorphic on an annulus containing the unit circle. This shows that we can develop  $(\id-zA_1-z^2 A_2-\cdots z^p A_p)^{-1}(B_0+zB_1 + \ldots + z^q B_q)$ as a Laurent series by the discussion preceding this theorem.

Finally, we show that the series defining $Y_t$ converges almost surely absolutely:
the Laurent series $\sum_{k=-\infty}^{\infty} z^k \psi_k$ converges absolutely on an annulus containing the unit circle.  As in the proof of Theorem 3.1.1 in \cite{BrockwellDavis} we find the same exponential decay of $||\psi_k||$ as in the one-dimensional case: the Laurent series is absolutely convergent, hence $\sum_{n=1}^{\infty} (1+\varepsilon)^n \psi_n$ and $\sum_{n=1}^{\infty} (1-\varepsilon)^{-n} \psi_{-n}$ are also absolutely convergent for $\varepsilon>0$ small enough. Hence there are $a>0$ and $0<b<1$ such that $||\psi_n||<ab^{|n|}$.
The proof of the almost sure absolute convergence is the same as in the proof of Theorem \ref{necessity} and one can check that $(Y_t)_{t \in \Z}$ is a strictly stationary solution.

\end{proof}

\begin{remark} The assumption that $B_0,\ldots,B_q$ are continuous operators is in fact not needed. In Theorem \ref{necessity} the white noise could be an $E$-valued sequence, where $E$ is a measure space with $B_k:E \to \mathcal{B}$ being measurable mappings. In Theorem \ref{sufficieny}, the mappings can be continuous operators from a normed space $\mathcal{A}$ to $\mathcal{B}$ or also only measurable mappings, if we assume $\E[\log^+||B_0 Z_t +B_1 Z_{t-1} + \ldots +  B_q Z_{t-q}||]<\infty$. \end{remark}

If $(Z_t)_{t \in \Z}$ is weak white noise, i.e. a sequence of second order random variables with constant expectation and constant covariance operator and whose cross covariance operators vanish, then $Y_t = \sum_{k=-\infty}^{\infty} \psi_k Z_{t-k}$ is well-defined and defines a weakly stationary sequence and hence a weakly stationary solution of the ARMA equation. See Definition 2.4 and Definition 3.1 in \cite{Bosq} for the definitions of weak white noise and weak stationarity.
Hence we get a corollary which generalises Theorem 3.1 in \cite{Bosq}:
\begin{coro}
Let $A_1,\cdots,A_p$ and $B_0,\cdots,B_q$ be continuous linear operators in a separable complex Banach space $\mathcal{B}$. Define $Q$ as in (\ref{operatorvaluedpolynomial}) and assume that $Q(z)$ is invertible for all $z \in \C$ with $|z|=1$. Let $(Z_t)_{t \in \Z}$ be weak white noise with $\E[Z_0]=0$. The ARMA($p$,$q$) equation (\ref{ARMAEQ}) then has a unique weakly stationary solution $Y=(Y_t)_{t \in \Z}$ given by
\[ Y_t = \sum_{k=-\infty}^{\infty} \psi_k Z_{t-k},\]
where the series converges almost surely absolutely and in the $L^2$-sense.
\end{coro}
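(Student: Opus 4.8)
The plan is to verify, in turn, that the series $\sum_{k}\psi_k Z_{t-k}$ converges almost surely absolutely and in $L^2$, that its sum $Y_t$ defines a weakly stationary solution of (\ref{ARMAEQ}), and that this solution is unique; almost all of the analytic input has already been assembled in Theorem \ref{sufficieny}.

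Since $Q(z)$ is invertible on $\K$, the proof of Theorem \ref{sufficieny} shows that $(\id - zA_1 - \cdots - z^pA_p)^{-1}(B_0 + zB_1 + \cdots + z^q B_q)$ is holomorphic on an annulus containing the unit circle, so its Laurent coefficients satisfy $\|\psi_k\| \le a\,b^{|k|}$ for some $a>0$, $0<b<1$. Because $\E\|Z_0\|^2<\infty$ gives $\E\|Z_0\|<\infty$, Tonelli's theorem yields $\E\sum_{k\in\Z}\|\psi_k Z_{t-k}\| \le a\,\E\|Z_0\|\sum_{k\in\Z} b^{|k|} < \infty$, so the series converges almost surely absolutely; this is exactly the estimate used in Theorem \ref{necessity}, Case 1, and uses nothing about the dependence structure. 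For $L^2$-convergence put $S_n=\sum_{|k|\le n}\psi_k Z_{t-k}$; for $m<n$ the triangle inequality in $\mathcal{B}$ together with the Cauchy--Schwarz inequality in $L^2(\Omega,\R)$ give
\[ \big(\E\|S_n - S_m\|^2\big)^{1/2} \;\le\; \sum_{m<|k|\le n}\|\psi_k\|\,\big(\E\|Z_{t-k}\|^2\big)^{1/2} \;\le\; \big(\E\|Z_0\|^2\big)^{1/2}\sum_{m<|k|}\|\psi_k\| , \]
which tends to $0$ as $m\to\infty$ by the exponential decay of $\|\psi_k\|$. As $L^2(\Omega;\mathcal{B})$ is complete, $(S_n)$ converges in $L^2$, and passing to an almost surely convergent subsequence identifies the $L^2$-limit with $Y_t$. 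Note that, lacking independence, one cannot invoke orthogonality here; the crude bound above suffices and only uses the second moment of $Z_0$.

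That $(Y_t)$ is weakly stationary was already noted in the paragraph preceding the corollary: $\E Y_t=\sum_k\psi_k\,\E Z_{t-k}=0$, and the (cross-)covariance operators of $Y$ are given by translation-invariant bilinear expressions in the $\psi_k$ and the covariance operator of $Z$, the relevant series converging by the exponential decay of $\|\psi_k\|$; this is the Banach-space analogue of the computation in Bosq \cite{Bosq}. That $Y$ solves (\ref{ARMAEQ}) follows by comparing Laurent coefficients in $(\id - zA_1 - \cdots - z^pA_p)\sum_k z^k\psi_k = B_0 + zB_1 + \cdots + z^q B_q$, which gives $\psi_k - A_1\psi_{k-1} - \cdots - A_p\psi_{k-p} = B_k$ for $0\le k\le q$ and $=0$ otherwise; applying the finitely many continuous operators $A_i$ termwise to the almost surely absolutely convergent series and reindexing then yields $Y_t - A_1 Y_{t-1} - \cdots - A_p Y_{t-p} = \sum_{k=0}^q B_k Z_{t-k}$ almost surely.

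For uniqueness, the difference $W$ of two weakly stationary solutions is a weakly stationary solution of the homogeneous equation, and $\widetilde W_t=(W_t,\ldots,W_{t-p+1})^T$ satisfies $\widetilde W_t=A\widetilde W_{t-1}$ with $A$ as in (\ref{newOperator2}); by the Lemma relating $\sigma(A)$ to the invertibility of $Q$, the operator $A$ is hyperbolic. Proceeding as in the uniqueness part of the proof of Theorem \ref{necessity} applied to this rewritten ARMA($1$,$q$) equation, one splits $\widetilde W_t$ via $S$ into a stable part with $\widetilde W^{(1)}_t=\Lambda_1^n\widetilde W^{(1)}_{t-n}$ and an unstable part with $\widetilde W^{(2)}_t=\Lambda_2^{-n}\widetilde W^{(2)}_{t+n}$; since $\|\Lambda_1^n\|\to 0$ and $\|\Lambda_2^{-n}\|\to 0$ exponentially while weak stationarity of $W$ gives $\sup_t\E\|\widetilde W_t\|^2<\infty$, both parts vanish in $L^2$, so $W_t=0$ almost surely for every $t$. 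Here convergence in $L^2$ replaces the It\^o--Nisio and Borel--Cantelli arguments of Theorem \ref{necessity}; this is also why no assumption on $Q(0)$ is needed, as only forward iteration of the homogeneous equation is used on the stable part. The one point that genuinely needs care is this appeal to weak stationarity to bound the second moments uniformly in $t$ — everything else is routine given Theorems \ref{necessity} and \ref{sufficieny}.
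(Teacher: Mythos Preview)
Your argument is correct and, for existence and $L^2$/almost sure convergence, it is essentially the paper's proof: both rely on the exponential decay $\|\psi_k\|\le a b^{|k|}$ established in Theorem \ref{sufficieny} and then bound the tail via Minkowski (the paper phrases this by citing the proof of Theorem 6.1 in \cite{Bosq} and Proposition 3.1.1 in \cite{BrockwellDavis}). For uniqueness the two proofs differ in presentation: the paper outsources the argument to Lemma 3.1 in \cite{BrockwellLindner2}, replacing the final Slutsky step there by Tchebychev's inequality; you instead lift to the companion form, use the hyperbolic splitting of $A$ from the discussion before Theorem \ref{necessity}, and kill each component in $L^2$ using $\|\Lambda_1^n\|\to 0$, $\|\Lambda_2^{-n}\|\to 0$ together with the uniform second-moment bound. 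Your route is more self-contained within the paper and makes transparent why no hypothesis on $Q(0)$ is needed (the stable part is iterated forward, and $\Lambda_2$ is automatically invertible).

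One small imprecision worth fixing: you assert that the difference $W$ of two weakly stationary solutions is itself weakly stationary, but this need not hold unless the two solutions are \emph{jointly} stationary. Fortunately your argument does not use this; all you actually invoke is $\sup_t \E\|\widetilde W_t\|^2<\infty$, and that follows from $\E\|W_t\|^2\le 2\E\|Y_t\|^2+2\E\|Y_t'\|^2$ together with the individual weak stationarity of $Y$ and $Y'$. Rephrasing that sentence accordingly would make the proof watertight.
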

\begin{proof} The existence as an almost surely absolutely convergent series can be established similar to Proposition 3.1.1. in \cite{BrockwellDavis}. The sequence $(\sum_{k=-n}^{n} \psi_k Z_{t-k})_{n \in \N}$ is a Cauchy sequence in the space of square integrable random variables as
\[\E||\sum_{k=-n}^{n} \psi_k Z_{t-k}-\sum_{k=-m}^{m} \psi_k Z_{t-k}||^2\leq \Big(\sum_{k=-n}^{n} ||\psi_k|| -\sum_{k=-m}^{m} ||\psi_k||\Big)^2 \E||Z_0||^2,\]
which follows from the proof of Theorem 6.1 in \cite{Bosq}. The uniqueness follows along the lines of the proof of Lemma 3.1 in \cite{BrockwellLindner2}. However, the last argument of the proof uses Slutsky's Lemma. This cannot be applied, but the convergence in probability follows from Tchebychev's inequality. Finally, it it obvious that $(Y_t)_{t \in \Z}$ is weakly stationary and one can check that it is a solution.\end{proof}

\section{Examples}

\subsection{Weaker moment conditions if $\sigma(A_1) = \{0\}$}

We already know that a $\log^+$-moment is sufficient for the existence of a solution of the AR($1$) equation $Y_t - A_1Y_{t-1}=Z_t$ if $\sigma(A_1) = \{0\}$. We now give examples to show that this sufficient condition is not necessary in this case.

\begin{example} The first example is that $A_1$ is the zero operator or more generally nilpotent, i.e. there is a power $A_1^n$ that vanishes. The spectral radius is then $r(A_1)=0$, hence $\sigma(A_1)=\{0\}$. The series $\sum_{n=0}^{\infty} A_1^n Z_{t-n}$ always converges. Hence there is no necessary moment condition. \end{example} 

In the next example, the spectral radius of the operator $A_1$ vanishes, i.e. $r(A_1)=0$ and $\sigma(A_1)=\{0\}$ but $A_1$ is not nilpotent. Recall that such operators are called \textsl{quasinilpotent}.

\begin{example}
Consider the weighted right shift $A_1$ on $c_0(\N)$ given by $A_1(x_0,x_1,x_2,\ldots)=(0,a_1 x_0, a_2 x_1, \ldots)$ for a monotone sequence $a_i\geq0$ tending to zero. We then have $||A_1^n||=a_1 a_2 \cdots a_n$. We set $a_n=\frac{e^{-e^n}}{e^{-e^{n-1}}}$ for $n>1$ and $a_1= e^{-e}$, hence $||A_1^n||=e^{-e^n}$. We claim that the condition $\E[\log^+ \log^+ ||Z_0||]<\infty$ is sufficient for the existence of a solution of the corresponding AR($1$) equation and that this  moment condition is sharp inasmuch as there cannot be a better sufficient moment condition. What is more, the condition is indeed different from the usual $\log^+$-moment condition in that there are distributions with $\log^+\log^+$-moments without $\log^+$-moments.
\end{example}
\begin{proof}
Sufficiency:
We know from the proof of Theorem \ref{necessity} that there is a strictly stationary solution for $Y_t-A_1Y_{t-1}=Z_t$ if  $\sum_{n=0}^{\infty} A_1^n Z_{t-n}$ converges almost surely absolutely. Hence it suffices to show that $||A_1^n Z_{t-n}||>e^{-n}$ only finitely often. We show this by the Borel-Cantelli-Lemma. First note that there is a $K$ such that $\log(e^n-n)=n + \log(1-ne^{-n}) > n - \frac{1}{2}$ for $n \geq K$. Hence
\begin{eqnarray*}
&& \sum_{n=K}^{\infty} \PP[||A_1^n Z_{t-n}||> e^{-n}]\\
& \leq & \sum_{n=K}^{\infty} \PP[ \log^+ \log^+ ||Z_0||> \log(e^n-n)]\\
& \leq & \sum_{n=K}^{\infty} \PP[ \log^+ \log^+ ||Z_0||> n- \frac{1}{2}].
\end{eqnarray*}
The last series is finite if $\E[\log^+\log^+ ||Z_0||]<\infty$. The Borel-Cantelli-Lemma then shows that a strictly stationary solution exists.

Sharpness:
We denote the $n$-th component of $Y_t$ and $Z_t$ by $Y_t^{(n)}$ and $Z_t^{(n)}$. One can show that a solution has to fulfil $Y_t^{(n)}= \sum_{i=0}^{n}(\prod_{j=i+1}^n a_j) Z_{t+i-n}^{(i)}$. We also know that $\lim_{n \to \infty} Y_t^{(n)}=0$ almost surely because the solution is in $c_0(\N)$. If we assume that only $Z_t^{(0)}$ is nondeterministic and all other component vanish, then the $\log^+\log^+$-moment condition is in fact necessary: then the components of $Y_t^{(n)}$ are  independent (for fixed $t$) and we get by the Borel-Cantelli-Lemma, because only finitely many summands have norm greater than $1$, the following inequality. The finiteness of the first series then gives the $\log^+\log^+$-moment:
\begin{eqnarray*}
&& \sum_{n=1}^{\infty} \PP[\log^+\log^+|Z_{0}^{(0)}|> n]\\
&=& \sum_{n=1}^{\infty} \PP[\log^+|Z_{0}^{(0)}|>\underbrace{-\sum_{i=1}^{n} \log a_i}_{=e^n}]\\
&=& \sum_{n=1}^{\infty} \PP[|Z_{0}^{(0)}|>\prod_{i=1}^{n} \frac{1}{a_i}]\\
&=& \sum_{n=1}^{\infty} \PP[|(\prod_{i=1}^{n} a_i) Z_{t-n}^{(0)}|>1]<\infty.
\end{eqnarray*}
Finally, let $P$ be a real-valued random variable with Pareto distribution on $[1,\infty)$ with index $\alpha=1$. Define $Z=xe^P$ where $x$ is a vector in $\mathcal{B}$ with norm $1$. $Z$ has $\log^+\log^+$-moment but no finite $\log^+$-moment, i.e. there is white noise $(Z_t)_{t \in \Z}$ fulfilling these moment conditions.

\end{proof}

The spectrum of the operator in the next example contains zero but the $\log^+$ condition is necessary.
\begin{example}
Let $A_1$ be the rescaled right shift operator on $\ell^2(\N)$ given by $A_1(x_0,x_1,\ldots) = \frac{1}{2}(0,x_0, x_1,\ldots)$. Then $A_1$ is not invertible, thus we have $0 \in \sigma(A_1)$, but $A_1$ has a left inverse given by $A_1^{-1}(x_0,x_1,\ldots) = 2(x_1,x_2,\ldots)$. It is known that $\sigma(A_1)=\{z \in \C: |z| \leq \frac{1}{2}\}$. The arguments of the proof of Theorem \ref{necessity} show that the $\log^+$-condition is necessary.
\end{example}

We now give another example with a different sufficient condition. Let $\Gamma$ be the gamma function given by $\Gamma(z)=\int_0^{\infty} t^{z-1} e^{-t} \, dt$ for $z > 0$. We know that $\Gamma(n)=(n-1)!$. Let $K > 0$ be a constant such that $\Gamma$ is strictly increasing on $[K,\infty)$. We then denote by $\Gamma^{-1}$ the inverse function of $\Gamma$ on the interval $[K,\infty)$.

\begin{example} Let $\mathcal{B} = C[0,1]$ be endowed with the supremum norm. Let $A_1$ be the Volterra integral operator given by $A_1x(s)= \int_0^s x(t) \, dt$. Then $\E [\Gamma^{-1}(||Z_0|| \vee K)] < \infty$ is a sufficient condition for the existence of a solution for the AR($1$) equation given by $A_1$. This condition is sharp as well.\end{example}
\begin{proof} We will show that $Y_t:=\sum_{n=0}^{\infty} A_1^n Z_{t-n}$ converges almost surely absolutely and we know that this then defines a solution for the AR($1$) equation. We know that $||A_1^n|| = \frac{1}{n!}$, see for example \cite{Conway} p.217 . It suffices to show that $||\frac{1}{n!} Z_{t-n}|| > \frac{1}{n(n-1)}$ for only finitely many $n$. We do so by using the Borel-Cantelli-Lemma:
\begin{eqnarray*}
& & \sum_{n=2}^{\infty} \PP\Big[||\frac{1}{n!} Z_0|| > \frac{1}{n(n-1)}\Big]\\
& = & \sum_{n=2}^{\infty} \PP[ ||Z_0|| > (n-2)!]\\
& \leq  & \sum_{n=2}^{\infty} \PP[ ||Z_0|| \vee K > (n-2)!]\\
& \leq  & \sum_{n=2}^{\infty} \PP[ \Gamma^{-1}(||Z_0|| \vee K) > (n-1)].
\end{eqnarray*}
The last series is finite because of the moment asumption.
Finally, we show that this condition cannot be improved: let $(\widetilde{Z_t})_{t \in \Z}$ be $\C$-valued i.i.d. white noise and define $Z_t=1_{[0,1]}\widetilde{Z_t}$. Now we assume that there is a solution $Y_t$. We evaluate this solution as a function at $1$, i.e. $\widetilde{Y_t}= Y_t(1)= \sum_{n=0}^{\infty} \frac{1}{n!} \widetilde{Z_{t-n}}$. This series necessarily converges if there is a solution $Y_t$. Hence again by the Borel-Cantelli-Lemma, we get $\sum_{n=0}^{\infty} \PP[|\widetilde{Z_{t-n}}|>n!]<\infty$, thus $\sum_{n=0}^{\infty} \PP[|\widetilde{Z_0}| \vee K >n!]<\infty$ and $\sum_{n=0}^{\infty} \PP[\Gamma^{-1}(|\widetilde{Z_0}| \vee K) >n+1]<\infty$. This shows that under this choice of white noise, the moment condition is necessary.

Finally, we give an example of a distribution with $\Gamma^{-1}$-moment but without $\log^+$-moment:
It can be shown that $\Gamma^{-1}(x)$ behaves asymptotically like $\frac{\log x}{\log \log x}$: by Stirling's formula, $\Gamma(y)$ behaves asymptotically  like $x:=\sqrt{y}(\frac{y}{e})^y$. Taking the logarithm, we obtain
\[\log \sqrt{y} + y(\log y  - 1) = \log x =: z.\]
By using the ansatz $y=\frac{z}{\log z}r(z)$ and inserting it in the last equation, one can show that $\lim_{z \to \infty} r(z)=1$, giving the desired asymptotic behaviour of $\Gamma^{-1}(x)$ as $x \to \infty$.

Now, define $f(x):=\frac{1}{x(\log x)^2 (\log \log x)}$. Then, for large $x_1$
\[\int_{x \geq x_1} f(x) \log x \, dx = \int_{x \geq x_1} \frac{1}{x \log x \log\log x} \, dx =\int_{y \geq \log x_1} \frac{1}{e^y y \log y}e^y \, dy = \int_{y \geq \log x_1} \frac{1}{y \log y} \, dy\]
and
\[\int_{x \geq x_1} f(x) \frac{\log x}{\log \log x} \, dx= \int_{y \geq \log x_1} \frac{1}{y (\log y)^2} \, dy .\]
Antiderivatives of the integrands are $\log \log y$ and $\frac{-1}{\log y}$. Hence by restricting $f$ on an interval $[x_1,\infty)$ and normalising it, we obtain a density that defines a distribution with the desired properties of  having finite $\Gamma^{-1}$-moment without having $\log^+$-moment.

\end{proof}

\subsection{Examples when $\sigma(A_1)$ is the closed unit disc}

We consider two further examples. In the first one the operator is an isometry and there is no nondeterministic solution, while in the second one the operator is a multiplication operator and there is a nondeterministic solution.

For the proof of the next proposition, we need some preparation:
a family of random variables $(X_i)_{i \in I}$ is said to \textsl{bounded in probability}, if for each $\varepsilon>0$ there exists a $\delta>0$ such that
\[\PP[||X_i||>\delta] < \varepsilon \mbox{ for all } i \in I.\]
Recall that if the partial sums $S_n$ of independent random variables in a separable space are bounded in probability, then they are also almost surely bounded, see \cite{Vakhania}, Theorem V.2.2. b) p. 266. Note that \cite{Vakhania} define boundedness in probability by boundedness with respect to a metric space that induces convergence in probability. Then they show that it coincides with this definition (Proposition III.1.2. p. 93).
Recall that almost sure boundedness means that the supremum is almost surely finite, i.e. $\PP[\sup_n ||S_n||<\infty]=1$. Finally, recall that a Banach space $\mathcal{B}$  does not contain subspaces being isomorphic to $c_0(\N)$ if and only if for every sequence of independent symmetric Radon random variables in $\mathcal{B}$, the almost sure boundedness of the sequence of the partial sums $S_n$ implies its convergence, see \cite{Ledoux}, Theorem 9.29.

\begin{propo} Let $A_1$ be an isometry and $\mathcal{B}$ be a separable Banach space, that does not contain a subspace being isomorphic to $c_0(\N)$, e.g. a Hilbert space. Then a necessary condition for the existence of a strictly stationary solution for the AR($1$) equation $Y_t-A_1 Y_{t-1}=Z_t$ with $(Z_t)_{t \in \Z}$ i.i.d. is that $Z_0$ is almost surely deterministic.
\end{propo}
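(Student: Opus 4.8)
The plan is to iterate the AR($1$) equation, recognise the resulting partial sums as sums of independent random variables, and feed them into the two structural facts recalled just before the proposition. Iterating $Y_t = Z_t + A_1 Y_{t-1}$ gives, for every $n\ge 1$,
\[ S_n := \sum_{j=0}^{n-1} A_1^j Z_{t-j} = Y_t - A_1^n Y_{t-n}, \]
where the summands $A_1^j Z_{t-j}$, $j\ge 0$, are independent because $(Z_t)_{t\in\Z}$ is i.i.d. Since $A_1$ is an isometry, $\|A_1^n Y_{t-n}\| = \|Y_{t-n}\|$, and $Y_{t-n}$ has the same law as $Y_t$ by strict stationarity, so $\|S_n\| \le \|Y_t\| + \|Y_{t-n}\|$ with both terms distributed as $\|Y_0\|$. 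Given $\varepsilon>0$, choosing $\delta$ with $\PP[\|Y_0\|>\delta/2]<\varepsilon/2$ then yields $\PP[\|S_n\|>\delta]<\varepsilon$ for all $n$, i.e.\ $(S_n)_n$ is bounded in probability.

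Next I would symmetrise: take an independent copy $(Z_t')_{t\in\Z}$ of $(Z_t)_{t\in\Z}$ and set $S_n' := \sum_{j=0}^{n-1} A_1^j Z_{t-j}'$, so that $S_n - S_n' = \sum_{j=0}^{n-1} A_1^j(Z_{t-j}-Z_{t-j}')$ is a partial sum of independent symmetric $\mathcal{B}$-valued random variables (Radon, by separability of $\mathcal{B}$). From $\PP[\|S_n-S_n'\|>2\delta]\le \PP[\|S_n\|>\delta]+\PP[\|S_n'\|>\delta]$ the sequence $(S_n-S_n')_n$ is again bounded in probability, hence almost surely bounded by Theorem V.2.2.b) in \cite{Vakhania}; and since $\mathcal{B}$ contains no subspace isomorphic to $c_0(\N)$, Theorem 9.29 in \cite{Ledoux} shows that $S_n - S_n'$ converges almost surely.

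Almost sure convergence of $S_n - S_n'$ forces its general term to tend to $0$ a.s., that is $A_1^n(Z_{t-n}-Z_{t-n}')\to 0$ a.s.\ as $n\to\infty$. Using the isometry once more, $\|A_1^n(Z_{t-n}-Z_{t-n}')\| = \|Z_{t-n}-Z_{t-n}'\|$, and these are i.i.d.\ copies of $\|Z_0-Z_0'\|$; an i.i.d.\ sequence of nonnegative random variables converging to $0$ a.s.\ must vanish a.s., for otherwise $\PP[\|Z_0-Z_0'\|>\eta]>0$ for some $\eta>0$ and the second Borel--Cantelli lemma gives a contradiction. Hence $Z_0 = Z_0'$ a.s.; since $Z_0'$ is an independent copy of $Z_0$, for every Borel set $B$ we obtain $\PP[Z_0\in B] = \PP[Z_0\in B]^2$, so the law of $Z_0$ takes only the values $0$ and $1$ and, $\mathcal{B}$ being separable, is a Dirac mass, i.e.\ $Z_0$ is almost surely deterministic.

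I expect the only genuinely delicate point to be the passage from the (possibly non-symmetric) partial sums $S_n$ to an object covered by Theorem 9.29 of \cite{Ledoux}; this is precisely what the symmetrisation step achieves, and it uses only that boundedness in probability is inherited by $S_n - S_n'$ and that the symmetrised summands are independent, both routine. Everything else is bookkeeping around the two cited theorems and the identity $\|A_1^n x\| = \|x\|$.
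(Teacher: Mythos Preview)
Your proof is correct and follows essentially the same route as the paper: iterate, use stationarity and the isometry to get boundedness in probability, symmetrise, invoke \cite{Vakhania} Theorem V.2.2.b) and \cite{Ledoux} Theorem 9.29 to upgrade to almost sure convergence, and finish with Borel--Cantelli on the i.i.d.\ norms $\|A_1^n(Z_{t-n}-Z_{t-n}')\|=\|Z_{t-n}-Z_{t-n}'\|$. The only cosmetic difference is that the paper symmetrises at the outset (replacing both $Y$ and $Z$ by differences of independent copies and then showing the symmetrised noise vanishes), whereas you symmetrise only the partial sums after establishing boundedness in probability and then argue separately that $Z_0=Z_0'$ a.s.\ forces a Dirac law; both orderings work.
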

\begin{proof}
We will mimic the argument of the proof of Theorem 1 in \cite{BrockwellLindner} c). We may symmetrise the processes by taking the differences of two independent copies of the white noise and the solution. We then show that $Z_0$ vanishes almost surely. We now assume that the white noise and the solution are symmetric.
We again obtain
\[Y_t - A_1^n Y_{t-n} = \sum_{k=0}^{n-1} A_1^k Z_{t-k}.\]
From the stationarity and the fact that $A_1^n$ are isometries, for each $\varepsilon>0$ there is a $K>0$ such that
\[\PP[||Y_t - A_1^n Y_{t-n}|| > K] < \varepsilon\]
as we have
\[ \PP[||Y_t - A_1^n Y_{t-n}|| > K] \leq \PP[\{||Y_t|| > \frac{K}{2}\} \cup \{||A^nY_t|| > \frac{K}{2}\}] \leq 2\PP[||Y_t|| > \frac{K}{2}].\] 
Hence
\[\PP[||\sum_{k=0}^{n-1} A_1^k Z_{t-k}|| > K] < \varepsilon\]
and the $\sum_{k=0}^{n-1} A_1^k Z_{t-k}$ are bounded in probability, hence almost surely bounded and thus $\sum_{k=0}^{\infty} A_1^k Z_{t-k}$ converges almost surely (by the theorems refered to above).
We now have
\[\sum_{k=0}^{\infty} \PP[||Z_0||>r]=\sum_{k=0}^{\infty} \PP[||A_1^kZ_{t-k}||>r] < \infty \quad \mbox{for all } r>0.\]
Thus $Z_0$ vanishes almost surely.
\end{proof}

\begin{example}
Let $\mathcal{B}$ be $\ell^{\infty}(\N)$ and let $A_1$ be the unilateral right shift defined by $A_1(x_0,x_1,\ldots)=(0,x_1,x_2,\ldots)$. We set $Z_t=(Z_t^{(0)},Z_t^{(1)},\ldots)$ and $Y_t=(Y_t^{(0)},Y_t^{(1)},\ldots)$. Then $Y_t^{(n)}:=\sum_{i=0}^n Z_{t+i-n}^{(i)}$ is a stationary solution as long as $Y_t$ is in $\ell^{\infty}(\N)$. If we assume $\sup_{\omega \in \Omega} \sup_{t \in \Z} |Z_t^{(i)}| \leq 2^{-i}$, then the $Y_t^{i}$ are always bounded by $2$, hence a strictly stationary solution exists though the shift is an isometry. This example does not contradict our proposition above as we have $c_0(\N) \subset \ell^{\infty}(\N)$ and $\ell^{\infty}(\N)$ is not separable.
\end{example}

We again consider the AR($1$) equation in the case that $A_1$ is a multiplication operator given by $A_1(x_0,x_1, \ldots) = (\lambda_0 x_0, \lambda_1 x_1, \ldots)$ on $\ell^2(\N)$. We set $Z_t=(Z_t^{(0)},Z_t^{(1)},\ldots)$ and $Y_t=(Y_t^{(0)},Y_t^{(1)},\ldots)$. Define $\sigma_i^2 = \Var(Z_t^{(i)})$. If we consider a component of a solution of the AR($1$) equation $Y_t - A_1 Y_{t-1}=Z_t$, we have $Y_t^{(n)}=\sum_{i=0}^{\infty} \lambda_n^i Z_{t-i}^{(n)}$, hence $\Var(Y_t^{(n)})= \sum_{i=0}^{\infty} (\lambda_n^i)^2 \Var(Z_{t-i}^{(n)})= \frac{\sigma_n^2}{1-\lambda_n^2}$. If $\sum_{n=0}^{\infty} \Var(Y_t^{(n)})$ is finite, then $Y_t$ is almost surely in $\ell^2(\N)$ and we choose a modification of $Y$ that is in $\ell^2(\N)$. Hence we get:

\begin{propo} \label{stronglystablear}Let $A_1$ be a multiplication operator in $\ell^2(\N)$ defined by a sequence $(\lambda_i)_{i \in \N}$ with $|\lambda_i|<1$ for all $i$. Let $(Z_t)_{t \in \Z}$ be i.i.d. and with the notation introduced above assume $\sum_{i=0}^{\infty} \sigma_i^2<\infty$. Then there is a solution for the AR($1$) equation given by $A_1$ if
\[\sum_{i=0}^{\infty} (\frac{\sigma_i^2}{1-\lambda_i^2}) < \infty.\]
If the white noise is Gaussian, then this condition is also necessary. In particular, $\E[||Z_t||^2]<\infty$ is not a sufficient condition for the existence of a strictly stationary solution.
\end{propo}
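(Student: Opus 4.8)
The plan is to reduce the statement to a componentwise computation and then argue via a zero–one law. First I would observe that the operator equation $Y_t - A_1 Y_{t-1} = Z_t$ decouples completely across coordinates because $A_1$ is diagonal: writing everything in terms of the $n$-th components gives the scalar AR($1$) equation $Y_t^{(n)} - \lambda_n Y_{t-1}^{(n)} = Z_t^{(n)}$ for each $n$. Since $|\lambda_n|<1$, the scalar theory (or the already-established Theorem \ref{necessity} applied on $\C$, noting $\E\log^+|Z_0^{(n)}|<\infty$ because $\E|Z_0^{(n)}|^2<\infty$) shows that the unique strictly stationary solution of each scalar equation is $Y_t^{(n)}=\sum_{i=0}^\infty \lambda_n^i Z_{t-i}^{(n)}$, which converges almost surely and in $L^2$, with $\Var(Y_t^{(n)})=\sigma_n^2/(1-\lambda_n^2)$ as computed in the text preceding the proposition.

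Next I would handle sufficiency: if $\sum_n \sigma_n^2/(1-\lambda_n^2)<\infty$, then by monotone convergence $\E\sum_n |Y_t^{(n)}|^2 = \sum_n \Var(Y_t^{(n)}) < \infty$ (using $\E[Z_0]=0$ from weak white noise hypotheses if centered, or simply that the mean part contributes a fixed $\ell^2$ vector — here I would just note the variance bound suffices up to centering), so $Y_t\in\ell^2(\N)$ almost surely. One then chooses the modification taking values in $\ell^2(\N)$; stationarity is inherited coordinatewise and one checks directly that the ARMA recursion holds in $\ell^2(\N)$ since it holds in each coordinate and both sides are genuine $\ell^2$-valued random variables. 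This gives a strictly stationary $\ell^2(\N)$-valued solution.

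For necessity in the Gaussian case, suppose a strictly stationary $\ell^2(\N)$-valued solution $(Y_t)$ exists. Projecting onto coordinate $n$ yields a strictly stationary solution of the scalar equation, which by uniqueness must equal $\sum_i \lambda_n^i Z_{t-i}^{(n)}$; hence $Y_t^{(n)}$ is Gaussian with variance $\sigma_n^2/(1-\lambda_n^2)$. Now $\sum_n |Y_t^{(n)}|^2<\infty$ almost surely. The quantity $\sum_n |Y_t^{(n)}|^2$ is a sum of independent nonnegative random variables (independence across $n$ coming from independence of the coordinate processes $(Z_\cdot^{(n)})_n$ — this uses that the components of an i.i.d. Gaussian sequence in $\ell^2$ with diagonal covariance are independent), so by the Kolmogorov three-series theorem (or simply: a sum of independent nonnegative variables converges a.s. iff the sum of expectations is finite) we get $\sum_n \E|Y_t^{(n)}|^2 = \sum_n \sigma_n^2/(1-\lambda_n^2)<\infty$. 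The last sentence of the proposition then follows by exhibiting $\lambda_n\to1$ fast enough that $\sum\sigma_n^2<\infty$ but $\sum\sigma_n^2/(1-\lambda_n^2)=\infty$.

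The main obstacle — really the only nontrivial point — is justifying the independence of the coordinate processes and hence of the summands $|Y_t^{(n)}|^2$, and making sure that "componentwise strictly stationary solution" genuinely assembles into an $\ell^2(\N)$-valued strictly stationary process with the right measurability; here the separability of $\ell^2(\N)$ and the explicit series representation make this routine but it should be stated. A secondary subtlety is that a priori a solution need not be built from the $Z_t^{(n)}$ in the causal way, but uniqueness of the scalar stationary solution (valid since $|\lambda_n|<1$) removes this freedom. I expect the whole argument to be short once these bookkeeping points are in place.
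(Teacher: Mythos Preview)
Your sufficiency argument is fine and coincides with the paper's: the componentwise variance computation plus $\sum_n \sigma_n^2/(1-\lambda_n^2)<\infty$ forces $Y_t\in\ell^2(\N)$ almost surely, and the rest is bookkeeping.

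The necessity argument, however, has a genuine gap. You assume that the summands $|Y_t^{(n)}|^2$ are independent across $n$, and you justify this by saying that ``the components of an i.i.d.\ Gaussian sequence in $\ell^2$ with diagonal covariance are independent''. But nothing in the proposition says the covariance operator of $Z_0$ is diagonal: ``Gaussian'' here means only that $Z_0$ is a Gaussian random element of $\ell^2(\N)$, and its covariance may well have off-diagonal entries. In that case the coordinate processes $(Z_\cdot^{(n)})_n$ are correlated, the $Y_t^{(n)}$ are correlated, and the three-series theorem is unavailable. (Incidentally, your parenthetical alternative --- ``a sum of independent nonnegative variables converges a.s.\ iff the sum of expectations is finite'' --- is false in general; take $X_n=2^n$ with probability $2^{-n}$ and $0$ otherwise.)

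The paper's route avoids independence entirely. It argues, via finite-dimensional projections and closure of the normal family under weak limits, that the law of $Y_t$ on $\ell^2(\N)$ is Gaussian. It then invokes the standard structural fact (e.g.\ Vakhania--Tarieladze--Chobanyan, Theorem~V.5.6) that any Gaussian measure on $\ell^2(\N)$ has a trace-class covariance operator; equivalently, by Fernique's theorem $\E\|Y_t\|^2<\infty$. Hence the diagonal entries $\E|Y_t^{(n)}|^2=\sigma_n^2/(1-\lambda_n^2)$ are summable, with no independence hypothesis needed. Your argument can be repaired by replacing the three-series step with this appeal to the trace-class property of Gaussian covariances.
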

\begin{proof} The sufficiency has already been observed above. To show necessity when $(Z_t)_{t \in \Z}$ is Gaussian,
consider only the first $n+1$ components of the AR($1$) equation. For this multivariate equation, there is always a solution and the distribution of the solution $(Y_t^{(0)},\ldots,Y_t^{(n)})$ is Gaussian. If $Y_t$ is well-defined in its state space $\ell^2(\N)$, then its distribution is Gaussian, because every linear functional of $Y_t$ is an almost sure limit of the linear functionals of $(Y_t^{(0)},\ldots,Y_t^{(n)})$. These are normally distributed and the normal distributions are closed under convergence in distribution. The diagonal entries of the covariance operator of the distribution of $Y_t$ are given by $\frac{\sigma_i^2}{1-\lambda_i^2}$. Recall that the diagonal entries of a covariance operator of a Gaussian measure on $\ell^2(\N)$ have to be summable, see \cite{Vakhania}, Theorem V.5.6 p. 334.
\end{proof}

We know that the spectrum of the unilateral right shift $R$ in $\ell^2(\N)$ has the closed unit disc as spectrum. We also know that the spectrum of the multiplication operator $M$ is $\overline{\{\lambda_i: i \in \N\}}$. We can now choose a sequence $(\lambda_i)_{i \in \N}$ with $|\lambda_i|<1$, such that $\sigma(R)=\sigma(M)$. The shift is an isometry, hence we already know that the corresponding AR($1$) equation has no strictly stationary solution but we also know that in the case of the multiplication operator $M$ we have a solution, if we choose a priori appropiate white noise.

The operator in the next example has the disc with radius 2 as spectrum. 
\begin{example}
Let $A_1$ be the rescaled right shift operator on $\ell^2(\N)$ given by $A_1(x_0,x_1,\ldots) = 2 (0,x_0, x_1,\ldots)$. Then $A_1$ has a left inverse given by $A_1^{-1}(x_0,x_1,\ldots) = \frac{1}{2}(x_1,x_2,\ldots)$. There is no solution for the corresponding AR($1$) equation if $Z_0$ is nondeterministic.
\end{example}
\begin{proof}
We mimic the arguments given in \cite{BrockwellDavis} p. 81 for the noncausal univariate AR($1$) equation. If there is a solution for the AR equation, we multiply the equation with $A_1^{-1}$ and get $A_1^{-1}X_t - X_{t-1} = A_1^{-1} Z_t$ and rearrange it to $X_t=-A_1^{-1} Z_{t+1} + A_1^{-1} X_{t+1}$. Iteration gives $X_t=-A_1^{-1} Z_{t+1} - \cdots -A_1^{-n} Z_{t+n} + A_1^{-n} X_{t+n}$. By taking the limit in distribution and applying again the Itô-Nisio-Theorem one sees that  if there is a solution for the corresponding AR($1$) equation for $A_1$, then it is unique and fulfils $X_t=-\sum_{j=1}^{\infty} A_1^{-j} Z_{t+j}$ as an almost sure limit. Hence on the one hand, we have $X_t^{(0)}=-\sum_{j=1}^{\infty} 2^{-j} Z_{t+j}^{(j)}$. On the other hand we also have $X_t^{(0)}=Z_t^{(0)}$ but this is impossible unless $Z_0$ is deterministic.
\end{proof}

\section{Summary}
The aim of this article was to find necessary and sufficient conditions for the existence of strictly stationary solutions of ARMA equations in Banach spaces with i.i.d. white noise.

Firstly, we generalised a result on multivariate ARMA($1$,$q$) processes by Brockwell, Lindner and Vollenbröker \cite{BrockLindVoll} by excluding the unit circle and  zero from the spectrum of the operator in the autogressive part. Secondly, we extended this to ARMA($p$,$q$) processes by using their representations as ARMA($1$,$q$) processes.  Thirdly, we gave an additional representation of the solution by using Laurent series. Finally, we provided various examples illustrating what can happen when we drop our assumptions. These examples show that a complete characterisation of the existence and uniqueness of strictly stationary solutions by the spectrum and moment conditions is not possible.

Our work extends the results in the book by Bosq \cite{Bosq} in that we allow for noncausal solutions, a broader class of operators, white noise without finite second moments and a moving average part.

\end{document}